\numberwithin{equation}{section}
\newtheorem{theorem}{Theorem}[section]
\newtheorem{remark}[theorem]{Remark}
\newtheorem{corollary}[theorem]{Corollary}
\theoremstyle{nonumberplain}
\newtheorem{proof}{Proof}
\crefname{equation}{}{}
\newcolumntype{d}{D{.}{.}{-1}}
\definecolor{newgreen}{RGB}{0,85,68}
\definecolor{aggrored}{RGB}{255,47,47}
\definecolor{steelblue3}{RGB}{33,29,132}
\newcommand{\important}[1]{{\color{aggrored}{#1}}}
\DeclarePairedDelimiter\abs{\lvert}{\rvert}
\DeclarePairedDelimiter\norm{\lVert}{\rVert}
\DeclarePairedDelimiterX\set[2]{\{}{\}}{\,#1 \;\delimsize\vert\; #2\,}
\numberwithin{equation}{section}
\newcommand{\ud}{\mathrm{d}}
\newcommand{\IR}{\mathbb{R}}
\renewcommand{\epsilon}{\ensuremath\varepsilon}
\newcommand   {\pd}   [2][]{\ensuremath{\frac{\partial #2}{\partial #1}}}
\newcommand   {\qpd}  [2][]{\pd[#1]{#2}}
\newcommand   {\grad} [2][]{\ensuremath{\nabla_{#1}#2}}
\renewcommand {\div}  [2][]{\ensuremath{\nabla_{#1}\cdot #2}}
\newcommand   {\laplace}{\nabla^2}
\newcommand{\byd}[1]{\;\text{d}#1}
\renewcommand{\vec}[1]{\ensuremath{\boldsymbol{#1}}}
\newcommand{\bbf}{\vec{f}}
\newcommand{\bF}{\vec{F}}
\newcommand{\bg}{\vec{g}}
\newcommand{\bG}{\vec{G}}
\newcommand{\bbs}{\vec{s}}
\newcommand{\bS}{\vec{S}}
\newcommand{\bu}{\vec{u}}
\newcommand{\bU}{\vec{U}}
\newcommand{\bv}{\vec{v}}
\newcommand{\bx}{\vec{x}}
\newcommand{\mmc}{\overline{c}}
\newcommand{\mE}{\overline{E}}
\newcommand{\mq}{\overline{q}}
\newcommand{\mS}{\overline{S}}
\newcommand{\mbS}{\overline{\bS}}
\newcommand{\mbbs}{\overline{\bbs}}
\newcommand{\mbu}{\overline{\bu}}
\newcommand{\mbU}{\overline{\bU}}
\newcommand{\mrho}{\overline{\rho}}
\newcommand{\mrhov}{\overline{\rho v}}
\newcommand{\mrhoe}{\overline{\rho e}}
\newcommand{\cF}{\ensuremath{\mathcal{F}}}
\newcommand{\cG}{\ensuremath{\mathcal{G}}}
\newcommand{\cL}{\ensuremath{\mathcal{L}}}
\newcommand{\cQ}{\ensuremath{\mathcal{Q}}}
\newcommand{\cR}{\ensuremath{\mathcal{R}}}
\newcommand{\cS}{\ensuremath{\mathcal{S}}}
\newcommand{\cW}{\ensuremath{\mathcal{W}}}
\newcommand{\algo}[1]{\textsc{#1}\xspace}
\newcommand{\cwenonaive}{\algo{Cweno3}\xspace}
\newcommand{\cwenoisentropic}{\algo{Cweno3} wb\xspace}
\newcommand{\half}{\ensuremath{1/2}}
\newcommand{\ig}[2][]{
  \IfFileExists{#2}
    {\includegraphics[#1]{#2}}
    {\begin{overpic}[#1]{img/dummy.jpg}
       \put (10,85) {\important{\framebox{\detokenize{#2}}}}
     \end{overpic}
     \message{File #2 does not exist.}
    }
}
\begin{document}

\begin{frontmatter}

\title{High-order well-balanced finite volume schemes for the Euler equations
       with gravitation}

\author[auth1]{L.~Grosheintz-Laval \corref{cor1}}
\cortext[cor1]{Corresponding author}
\ead{luc.grosheintz@sam.math.ethz.ch}
\author[auth1]{R.~K\"appeli}

\address[auth1]{
Seminar for Applied Mathematics (SAM),
Department of Mathematics,
ETH Z\"urich,
CH-8092 Z\"urich,
Switzerland}

\begin{keyword}
Numerical methods \sep
Hydrodynamics \sep
Source terms \sep
Well-balanced schemes

\end{keyword}

\begin{abstract}
A high-order well-balanced scheme for the Euler equations with gravitation is
presented.
The scheme is able to preserve a spatially high-order accurate discrete
representation of a large class of hydrostatic equilibria.
It is based on a novel local hydrostatic reconstruction, which, in combination
with any standard high-order accurate reconstruction procedure, achieves
genuine high-order accuracy for smooth solutions close or away from equilibrium.
The resulting scheme is very simple and can be implemented into any existing
finite volume code with minimal effort.
Moreover, the scheme is not tied to any particular form of the equation of
state, which is crucial for example in astrophysical applications.
Several numerical experiments demonstrate the robustness and high-order
accuracy of the scheme nearby and out of hydrostatic equilibrium.
\end{abstract}

\end{frontmatter}


\section{Introduction}
\label{sec:intro}
A multitude of interesting physical phenomena are modeled by the Euler
equations with gravitational source terms.
Applications range from the study of atmospheric phenomena, such as numerical
weather prediction and climate modeling, to the numerical simulation of the
climate of exoplanets, convection in stars and core-collapse supernova
explosions.
The Euler equations with gravitational source terms express the conservation
of mass, momentum and energy:
\begin{align}
  \label{eq:euler_continuity}
  \qpd[t]{\rho} + \div{\left( \rho \bv \right)} &= 0 \\
  \label{eq:euler_momentum}
  \qpd[t]{\rho \bv} + \div{\left( \rho \bv\otimes\bv \right)}
                    + \grad{p} &= -\rho \grad{\phi} \\
  \label{eq:euler_energy}
  \qpd[t]{E} + \div{\left( \bv\,(E + p) \right)} &= -\rho \bv \grad{\phi}.
\end{align}
Here $\rho$ is the mass density, $\bv$ the velocity and
\begin{align}
  \label{eq:intro_0010}
  E = \rho e + \frac{\rho}{2} v^2
\end{align}
the total fluid energy density being the sum of internal and kinetic energy
densities.
The pressure $p$ is related to the density and specific internal energy
through an equation of state $p = p(\rho,e)$.

The source terms on the right-hand side of the momentum and energy equations
model the effect of the gravitational forces on the fluid.
They are dictated by the variation of the gravitational potential $\phi$, which
can either be a given function or, in the case of self-gravity, be
determined by the Poisson equation
\begin{align}
  \label{eq:poisson}
  \laplace{\phi} = 4 \pi G \rho
  ,
\end{align}
where $G$ is the gravitational constant.

In many physically relevant applications, such as the ones named above, (parts
of) the flow of interest may be realized close to hydrostatic equilibrium
\begin{align}
  \label{eq:equilibrium}
  \grad{p} = -\rho \grad{\phi}.
\end{align}
As a matter of fact, the numerical simulation of near equilibrium flows is
challenging for standard finite volume methods.
The reason for this is that these methods may in general not satisfy a
discrete equivalent of the equilibrium.
Thus such states are not preserved exactly but are solely approximated with
an error proportional to the truncation error of the scheme.
So if the interest relies in the simulation of small perturbations on top
of a hydrostatic equilibrium, the numerical resolution has to be increased to
the point that the truncation errors do not obscure these small perturbations.
This may result in prohibitively high computational costs, especially in
several space dimensions.

A design principle to overcome the challenge was introduced by Greenberg and
Leroux \cite{Greenberg1996} leading to the concept of so-called well-balanced
schemes.
In these schemes, a discrete equivalent of the equilibrium is exactly
satisfied.
Therefore, they possess the ability to maintain discrete equilibrium states
down to machine precision and are capable of resolving small equilibrium
perturbations effectively.
Many well-balanced schemes have been designed, especially for the shallow
water equations with non-trivial bottom topography, see e.g.
\cite{LeVeque1998,Audusse2004,Noelle2006} and references therein.
An extensive review on well-balanced schemes for many different applications is
also given in the book by Gosse \cite{Gosse2013}.

Well-balanced schemes for the Euler equations with gravitation have received a
considerable amount of attention in the recent literature.
First, LeVeque and Bale \cite{LeVeque1999} have applied the quasi-steady
wave-propagation algorithm \cite{LeVeque1998} to the Euler equations with
gravity.
Few years later, Botta et al. \cite{Botta2004539} designed a well-balanced
finite volume scheme for numerical weather prediction applications.
More recently, several well-balanced finite volume
\cite{LeVeque2011,Kaeppeli2014199,Desveaux2014,Chandrashekar2015,2016A&A...587A..94K,ToumaKoleyKlingenberg2016,LiXing2016,Kaeppeli2017,ChertockCuiKurganovEtAl2018}
, finite difference \cite{Xing2013,LiXing2018a}
and discontinuous Galerkin \cite{Li2015,ChandrashekarZenk2017,LiXing2018}
schemes have been presented.
Magnetohydrostatic steady state preserving well-balanced finite volume schemes
were devised in \cite{Fuchs20104033}.
To the best of our knowledge, many of the mentioned schemes are at most
second-order accurate and only \cite{Xing2013,Li2015,ChandrashekarZenk2017,LiXing2018a,LiXing2018} go to higher orders.
However, with the notable exception of \cite{ChandrashekarZenk2017}, it appears
that these schemes need the equilibrium to be predetermined.

In fact, equation \cref{eq:equilibrium} only specifies a mechanical equilibrium.
In order to fully characterize the equilibrium a thermal variable, such as
the specific entropy $s$ or the temperature $T$, needs to be supplemented.
As a concrete astrophysically relevant example of a stationary state we
consider the case of constant entropy.
The relevant thermodynamic relation for isentropic hydrostatic equilibrium is
\begin{equation}
  \label{eq:intro_0020}
  \ud h = T \ud s + \frac{\ud p}{\rho},
\end{equation}
where $h$ is the specific enthalpy
\begin{equation}
  \label{eq:intro_0030}
  h = e + \frac{p}{\rho}
  ,
\end{equation}
$T$ the temperature and $s$ the specific entropy.
Then we can write \cref{eq:equilibrium} for the isentropic case
($\ud s = 0$) as
\begin{equation}
  \label{eq:intro_0040}
  \frac{1}{\rho} \nabla p = \nabla h = - \nabla \phi
  .
\end{equation}
The last equation can then be trivially integrated to obtain
\begin{equation}
  \label{eq:intro_0050}
  h + \phi = const.
\end{equation}
In \cite{Kaeppeli2014199} this equilibrium was used to build a second-order
accurate well-balanced finite volume scheme.
Along the same lines, well-balanced schemes for isothermal hydrostatic
equilibrium can be constructed \cite{Kaeppeli2017}.
In the latter case, the relevant thermodynamic potential is the Gibbs free
energy.

In this paper, we extend the well-balanced finite volume schemes
\cite{Kaeppeli2014199} beyond second-order accuracy.
The scheme possesses the following novel features:
\begin{itemize}
\item An arbitrarily high-order accurate local hydrostatic profile is
      constructed based on the equilibrium \eqref{eq:intro_0050}.
\item An arbitrarly high-order equilibrium preserving reconstruction is
      designed on the basis of any standard high-order reconstruction
      procedure.
\item A well-balanced source term discretization is built from the equilibrium
      preserving reconstruction.
\item It is well-balanced for any consistent numerical flux, which allows a
      straightforward implementation within any standard finite volume method.
\item It is well-balanced for multi-dimensional hydrostatic equilibria.
\item It is not tied to any particular equation of state such as the
      ideal gas law.
      This is important, especially for astrophysical applications.
\end{itemize}

The rest of the paper is structured as follows: the well-balanced finite volume
scheme is presented in section \ref{sec:nm}.
Extensive numerical results are presented in section \ref{sec:numex} and
conclusions are provided in section \ref{sec:conc}.


\section{Numerical Method}
\label{sec:nm}
\subsection{One-dimensional scheme}
\label{sec:nm_1d}
We first consider the Euler equations with gravitation
\cref{eq:euler_continuity,eq:euler_momentum,eq:euler_energy} in one space
dimension and write them in the following compact form
\begin{equation}
  \label{eq:nm_1d_0010}
    \frac{\partial \bu}{\partial t}
  + \frac{\partial \bbf}{\partial x} = \bbs
\end{equation}
with
\begin{equation}
  \label{eq:nm_1d_0020}
  \bu = \begin{bmatrix}
          \rho     \\
          \rho v_x \\
          E
        \end{bmatrix}
  \; , \quad
  \bbf = \begin{bmatrix}
           \rho v_x       \\
           \rho v_x^2 + p \\
           (E + p) v_x
         \end{bmatrix}
  \quad \mathrm{and} \quad
  \bbs = - \begin{bmatrix}
             0        \\
             \rho     \\
             \rho v_x
           \end{bmatrix} \frac{\partial \phi}{\partial x}
  ,
\end{equation}
where $\bu$, $\bbf$ and $\bbs$ are the vectors of conserved variables, fluxes
and source terms.
An equation of state (EoS) $p = p(\rho,e)$ relates the pressure to the
density $\rho$ and specific internal energy $e$ (or any other thermodynamic
quantity such as specific entropy $s$ or temperature $T$).
For example, a simple EoS is provided by the ideal gas law
\begin{equation}
  \label{eq:nm_1d_0030}
  p = \rho e ( \gamma - 1)
  ,
\end{equation}
where $\gamma$ is the ratio of specific heats.
We stress that the well-balanced scheme derived below is not tied to any
particular form of EoS, which is crucial especially in astrophysical
applications.

In the next section we will briefly describe a standard high-order
finite-volume discretization and it's core components in order to fix the
notation.
The following sections will then describe our novel well-balanced scheme in
detail.

\subsubsection{Finite-volume discretization}
\label{subsubsec:nm_1d_fv}
For the numerical approximation of \eqref{eq:nm_1d_0010}, the spatial domain
of interest is discretized by a number of cells or finite volumes
$I_{i} = [x_{i-1/2},x_{i+1/2}]$.
Here $x_{i\pm1/2}$ denotes the left and right cell interface, respectively,
and $x_{i} = (x_{i-1/2} + x_{i+1/2})/2$ the cell center of $I_{i}$.
For ease of presentation, we assume a regular cell size
$\Delta x = x_{i+1/2} - x_{i-1/2}$.
Nevertheless, varying cell sizes can easily be accommodated for.

A one-dimensional semi-discrete finite volume scheme is then given by
\begin{equation}
  \label{eq:nm_1d_fv_0010}
  \frac{\mathrm{d} \mbU_{i}}{\mathrm{d} t}
  = \cL(\mbU)
  = - \frac{1}{\Delta x}
        \left( \bF_{i+1/2} - \bF_{i-1/2} \right)
      + \mbS_{i}
  ,
\end{equation}
where $\mbU_{i} = \mbU_{i}(t)$ denotes the approximate cell average of the
conserved variables in cell $I_{i}$ at time $t$.
It approximates the exact cell average $\mbu_{i} = \mbu_{i}(t)$ of the true
solution $\bu(t,x)$ at time $t$:
\begin{equation}
  \label{eq:nm_1d_fv_0020}
  \mbU_{i}(t) \approx \mbu_{i}(t)
              =       \frac{1}{\Delta x} \int_{I_{i}} \bu(t,x) \byd{x}
  .
\end{equation}
In the following, a quantity with an overbar indicates a cell average while a
quantity without indicates a point value.
By $\mbS_{i}(t)$ is denoted the approximate cell average of the true
source terms at time $t$:
\begin{equation}
  \label{eq:nm_1d_fv_0030}
  \mbS_{i}(t)
    \approx \mbbs_{i}(t)
    =       \frac{1}{\Delta x}
            \int_{I_{i}} \bbs(\bu,\frac{\partial \phi}{\partial x})
            \byd{x}
  .
\end{equation}
Note that we have suppressed the time dependence of the gravitational potential
since we are mainly concerned with flows close to hydrostatic equilibrium and
for ease of notation.

\paragraph{Numerical flux}
The numerical flux is obtained by solving (approximately) the Riemann problem
at cell interfaces
\begin{equation}
  \label{eq:nm_1d_fv_0040}
  \bF_{i+1/2} = \cF ( \bU_{i+1/2-} ,\bU_{i+1/2+} )
  ,
\end{equation}
where the point values $\bU_{i+1/2\mp}$ are the cell interface extrapolated
conserved variables and $\cF$ is a consistent, i.e.
$\cF(\bu,\bu) = \bbf(\bu)$, and Lipschitz continuous numerical flux
function.

Below, we will make use of the HLLC approximate Riemann solver with simple
wave speed estimates from \cite{BattenClarkeLambertEtAl1997,Toro1997}.
Though, our well-balanced scheme is independent of this particular choice.

\paragraph{Reconstruction}
The purpose of a reconstruction procedure $\cR$ is to compute accurate point
values of the approximate solution $\bU_{i}(t,x)$ within each cell from the
cell averages $\mbU$.
We denote such a reconstruction procedure, which recovers a $r$-th order
accurate point value of a quantity $c$ at location $x$ within cell $I_{i}$ from
the cell averages $\mmc$, by
\begin{equation}
  \label{eq:nm_1d_fv_0050}
  c_{i}(x) = \cR(x;\{\mmc_{k}\}_{k \in S_{i}})
  .
\end{equation}
Here $S_{i}$ is the stencil for the reconstruction procedure for cell $I_{i}$,
i.e.\ $S_{i}$ is a finite set of neighbors of $I_{i}$.

The values of the conserved variables extrapolated to the interface are then
given by
\begin{equation*}
  \bU_{i+1/2-} = \bU_{i  }(t,x_{i+1/2})
               = \cR\left(x_{i+1/2};\{\mbU_{k}\}_{k \in S_{i  }}\right)
  \quad \text{and} \quad
  \bU_{i+1/2+} = \bU_{i+1}(t,x_{i+1/2})
               = \cR\left(x_{i+1/2};\{\mbU_{k}\}_{k \in S_{i+1}}\right)
  .
\end{equation*}
Many such reconstruction procedures have been developed and a non-exhaustive
list includes the Total Variation Diminishing (TVD) methods (see e.g.
\cite{Leer1979,Harten1983}), the Piecewise-Parabolic Method (PPM)
\cite{Colella1984}, Essentially Non-Oscillatory (ENO) (see e.g.
\cite{HartenEngquistOsherEtAl1987}), Weighted ENO (WENO) (see e.g.
\cite{Shu2009} and references therein) and Central WENO (CWENO) methods
(see e.g. \cite{CraveroPuppoSempliceEtAl2017} and references therein).

In the scheme derived below we will use a CWENO type reconstruction procedure.
This choice is motivated by the fact that CWENO provides an entire
reconstruction polynomial defined everywhere in a cell, which is convenient for
the evaluation of the gravitational source terms.
However, our scheme is independent of this particular choice.

\paragraph{Source term discretization}
The approximate cell average of the source term $\mbS_{i}$ is obtained by
numerical integration.
Let $\cQ_{i}$ denote a $q$-th order accurate quadrature rule over cell $I_{i}$.
Then the cell average of the source term is approximated by
\begin{equation}
  \label{eq:nm_1d_fv_0060}
  \mbS_{i} = \frac{1}{\Delta x}
             \cQ_{i} \left(\bbs(\bU,\frac{\partial \phi}{\partial x}) \right)
           = \frac{1}{\Delta x}
             \sum_{\alpha=1}^{N_{q}}
                 \omega_{\alpha}
               ~ \bbs \left( \bU_{i}(t,x_{i,\alpha})
                            ,\frac{\partial \phi}{\partial x}(x_{i,\alpha})
                      \right)
  ,
\end{equation}
where the $x_{i,\alpha} \in I_{i}$ and $\omega_{\alpha}$ denote the $N_{q}$
quadrature nodes and weights of $Q_{i}$, respectively.
For example, the two-point Gauss-Legendre quadrature rule can be used, which is
the choice we will make below.
The point values of the conserved variables at the quadrature nodes
$\bU_{i}(t,x_{i,\alpha})$ are obtained by the reconstruction procedure:
\begin{equation}
  \label{eq:nm_1d_fv_0070}
  \bU_{i}(t,x_{i,\alpha}) = \cR
                            \left( x_{i,\alpha};\{\mbU_{k}\}_{k \in S_{i}}
                            \right)
  .
\end{equation}
If the gravitational potential is known analytically, it can be evaluated
directly at the quadrature nodes.
If it is not, then a suitable interpolation has to be applied.

\paragraph{Temporal discretization}
The temporal domain of interest $[0,T]$ is discretized into time steps
$\Delta t = t^{n+1} - t^{n}$, where the superscript $n$ labels the
different time levels.
For the temporal integration, the high-order strong stability-preserving
Runge-Kutta (SSP-RK) schemes \cite{GottliebShuTadmor2001} can be used.
In particular, we use the third-order SSP-RK method for the numerical
results presented in this paper
\begin{equation}
  \label{eq:SSP-RK3}
  \begin{aligned}
    \mbU_{i}^{(1)} & = \mbU_{i}^{n} + \Delta t \cL(\mbU^{n}) \\
    \mbU_{i}^{(2)} & = \frac{3}{4} \mbU_{i}^{n}
                     + \frac{1}{4} \left(  \mbU_{i}^{(1)}
                                        + \Delta t \cL(\mbU^{(1)})
                                  \right) \\
    \mbU_{i}^{n+1} & = \frac{1}{3} \mbU_{i}^{n}
                     + \frac{2}{3} \left(  \mbU_{i}^{(2)}
                                         + \Delta t \cL(\mbU^{(2)})
                                  \right)
    ,
  \end{aligned}
\end{equation}
where $\cL$ denotes the spatial discretization operator from
\eqref{eq:nm_1d_fv_0010}.
Furthermore, the time step $\Delta t$ has to fulfill a certain CFL condition.

This concludes the description of a standard high-order finite volume scheme
for the Euler equations.
We refer to the excellent books available in the literature for detailed
derivations, e.g. \cite{Godlewski1996,Hirsch2007,Laney1998,LeVeque1998}.
However, a standard reconstruction procedure and source term discretization
will in general not preserve a discrete equivalent of hydrostatic equilibrium.
In order to achieve this, we need the ingredients presented in the following
two sections \ref{subsubsec:nm_1d_wbrec} and \ref{subsubsec:nm_1d_wbsrc}.

\subsubsection{Local hydrostatic reconstruction}
\label{subsubsec:nm_1d_wbrec}
The local hydrostatic reconstruction consists of two parts.
First, within each cell a high-order accurate equilibrium profile that is
consistent with the cell-averaged conserved variables is determined.
Second, the cell's equilibrium profile is extrapolated to neighboring cells
to perform a high-order accurate reconstruction of the equilibrium
perturbation.

We begin by describing how the local high-order accurate equilibrium profile is
determined.
Within the $i$-th cell $I_{i}$, we define a subcell equilibrium reconstruction
of the specific enthalpy $h_{eq,i}(x)$ by assuming \eqref{eq:intro_0050} as
\begin{equation}
  \label{eq:nm_1d_wbrec_0010}
  h_{eq,i}(x) = h_{0,i} + \phi_{i} - \phi(x)
  .
\end{equation}
Here $h_{0,i} = h_{eq,i}(x_{i})$ and $\phi_{i} = \phi(x_{i})$ are point values
of the specific enthalpy and the gravitational potential at the cell center,
respectively.
In the following, we assume that the gravitational potential can be evaluated
anywhere, either because it is a given function or obtained by a suitable
interpolation.

In combination with the (assumed constant) equilibrium entropy $s_{0,i}$ in
cell $I_{i}$, the equilibrium density $\rho_{eq,i}(x)$ and internal energy
density $\rho e_{eq,i}(x)$ profiles can be computed through the EoS:
\begin{equation*}
  \rho_{eq,i}(x) = \rho(h_{eq,i}(x),s_{0,i})
  \quad \text{and} \quad
  \rho e_{eq,i}(x) = \rho e(h_{eq,i}(x),s_{0,i})
  .
\end{equation*}
The computational complexity of this computation depends strongly on the
functional form of the EoS.
For the ideal gas case, explicit expressions are given in \ref{sec:appendix01}.

We note that the equilibrium specific enthalpy $h_{0,i}$ and entropy $s_{0,i}$
are not specified so far.
In order to fix $h_{0,i}$ and $s_{0,i}$, we demand that the equilibrium density
and internal energy density profiles agree up to the desired order of accuracy
with their respective cell average in cell $I_{i}$.
Hence, we seek $h_{0,i}$ and $s_{0,i}$ such that
\begin{equation}
  \label{eq:nm_1d_wbrec_0020}
  \begin{aligned}
  \mrho_{i}  & = \frac{1}{\Delta x} \cQ_{i}(\phantom{e}\rho_{eq,i})
               = \frac{1}{\Delta x}
                 \sum_{\alpha=1}^{N_{q}}  \omega_{\alpha}
                                        ~ \rho  (h_{eq,i}(x_{i,\alpha})
                                                ,s_{0,i})               \\
  \mrhoe_{i} & = \frac{1}{\Delta x} \cQ_{i}(\rho e_{eq,i})
               = \frac{1}{\Delta x}
                 \sum_{\alpha=1}^{N_{q}}  \omega_{\alpha}
                                        ~ \rho e(h_{eq,i}(x_{i,\alpha})
                                                ,s_{0,i})
  ,
  \end{aligned}
\end{equation}
where $\cQ_{i}$ denotes the previously introduced $q$-th order accurate
quadrature rule over cell $I_{i}$.
In the above expression, an estimate of the cell average of the internal energy
density $\mrhoe_{i}$ is needed.
We simply estimate it directly from the cell-averaged conserved variables by
\begin{equation}
  \label{eq:nm_1d_wbrec_0030}
  \mrhoe_{i} = \mE_{i} - \frac{1}{2} \frac{\mrhov_{x,i}^{2}}{\mrho_{i}}
  ,
\end{equation}
which is exact at equilibrium ($v_x \equiv 0$).

Note that, in general, \eqref{eq:nm_1d_wbrec_0020} represents a nonlinear system
of two equations in the equilibrium specific enthalpy at cell center $h_{0,i}$
and the (constant) specific entropy $s_{0,i}$.
This system must be solved iteratively, e.g. with Newton's method.
In practice, the iterative process is started from the specific entropy and
enthalpy computed from the cell-averaged conserved variables $\mbU_{i}$.
The cost of this iterative process is mitigated by the fact that it is local to
each cell and the initial guess is a spatially second order accurate estimate,
i.e.\ a very small two-by-two system of equations must be solved,
independently, in every cell starting from a good initial guess.
For the ideal gas law, the system can be reduced to a single nonlinear equation
for which existence and uniqueness of the solution can be guaranteed under very
weak requirements.
This is shown in \ref{sec:appendix01}.

Once $h_{0,i}$ and $s_{0,i}$ have been fixed, we have the following high-order
accurate representation of the equilibrium in cell $I_{i}$:
\begin{equation}
  \label{eq:nm_1d_wbrec_0040}
  \bU_{eq,i}(x) = \begin{bmatrix}
                    \rho_{eq,i}(x)   \\
                    0                \\
                    \rho e_{eq,i}(x)
                  \end{bmatrix}
  .
\end{equation}

Next we develop the high-order equilibrium preserving reconstruction procedure.
The idea is to decompose the solution into an equilibrium and a (possibly
large) perturbation part.
Within cell $I_{i}$, the equilibrium part is simply given by the previously
derived equilibrium profile $\bU_{eq,i}(x)$.
The perturbation part is obtained by applying the standard reconstruction $\cR$
procedure on the equilibrium perturbation cell averages
\begin{equation}
  \label{eq:nm_1d_wbrec_0050}
  \delta \bU_{i}(x) = \cR
                      \left(
                        x; \{\mbU_{k} - \cQ_{k}(\bU_{eq,i})\}_{k \in S_{i}}
                      \right)
  ,
\end{equation}
which results in a $\min(q,r)$-th order accurate representation of the
equilibrium perturbation in cell $I_{i}$.
Note that the equilibrium perturbation cell average in cell $I_{k}$ is obtained
by taking the difference between the actual cell average $\mbU_{k}$ in cell
$I_{k}$ and the cell average of the equilibrium profile $\bU_{eq,i}(x)$ in cell
$I_{k}$.
The latter is evaluated by applying the $I_{k}$ cell's quadrature rule
$\cQ_{k}$ to $\bU_{eq,i}(x)$.

The full equilibrium preserving reconstruction $\cW$ is then obtained by simply
adding the equilibrium profile to the perturbation
\begin{equation}
  \label{eq:nm_1d_wbrec_0060}
  \bU_{i}(x) = \cW(x;\{\mbU_{k}\}_{k \in S_{i}})
             = \bU_{eq,i}(x) + \delta \bU_{i}(x)
  .
\end{equation}
We observe that, by construction, this reconstruction will preserve any
equilibrium of the form \eqref{eq:intro_0050}, since the perturbation $\delta
\bU_{i}(x)$ vanishes under these conditions.

\begin{remark}
Any function can be written as some other function plus the difference. Clearly,
this difference can be reconstructed from the cell-averages of the difference.
Therefore, the well-balanced reconstruction procedure \cref{eq:nm_1d_wbrec_0060}
is high-order accurate, for any smooth function $\bU_{eq,i}(x)$. In particular,
the choice of an only second order accurate estimate of $\mrhoe_i$ does not
affect the overall order of the reconstruction.
\end{remark}

\subsubsection{Well-balanced source term discretization}
\label{subsubsec:nm_1d_wbsrc}
For the momentum source discretization, we use the previous splitting of the
cell $I_{i}$'s density $\rho_{i}(x)$ into equilibrium $\rho_{eq,i}(x)$ and
perturbation $\delta\rho_{i}(x)$ as
\begin{equation*}
  \begin{aligned}
  S_{\rho v,i}(x) & = - \rho_{i}(x) \frac{\partial \phi}{\partial x}(x)
                    = - \left( \rho_{eq,i}(x) + \delta\rho_{i}(x) \right)
                      \frac{\partial \phi}{\partial x}(x)
                    = - \rho_{eq,i}(x) \frac{\partial \phi}{\partial x}(x)
                      - \delta\rho_{i}(x) \frac{\partial \phi}{\partial x}(x)
  ,
  \end{aligned}
\end{equation*}
which is clearly a pointwise $\min(q,r)$-th order accurate approximation of the
true source term.
However, a straightforward numerical integration will not result in a
well-balanced scheme.
Instead, we use the fact that for the equilibrium profiles we have
\begin{equation*}
  \frac{\partial p_{eq,i}}{\partial x} = - \rho_{eq,i}
                                           \frac{\partial \phi}{\partial x}
\end{equation*}
by construction.
As a result, the equilibrium part of the momentum source term can be trivially
integrated and numerical integration is only applied to the perturbation part:
\begin{equation}
  \label{eq:nm_1d_wbsrc_0010}
  \mS_{\rho v,i} = \frac{p_{eq,i}(x_{i+1/2}) - p_{eq,i}(x_{i-1/2})}{\Delta x}
                 - \frac{1}{\Delta x}
                   \cQ_{i} \left(
                             \delta\rho_{i} \frac{\partial \phi}{\partial x}
                           \right)
  .
\end{equation}
Since we are only concerned with stationary equilibria, the energy source term
$\mS_{E,i}$ discretization is left unchanged from \eqref{eq:nm_1d_fv_0060}.

We summarize the developed high-order well-balanced finite volume scheme in
the following theorem:
\begin{theorem}
\label{thm:nm_1d}
Consider the scheme \eqref{eq:nm_1d_fv_0010} with a consistent and Lipschitz
continuous numerical flux $\cF$, a $r$-th order accurate spatial reconstruction
procedure $\cR$, a $q$-th order accurate quadrature rule $\cQ$, the hydrostatic
reconstruction $\cW$ \cref{eq:nm_1d_wbrec_0060} and the gravitational source
term $\cS$ \cref{eq:nm_1d_fv_0060} (with \cref{eq:nm_1d_wbsrc_0010}).

This scheme has the following properties:
\begin{enumerate}[(i)]
\item The scheme is consistent with \eqref{eq:nm_1d_0010} and it is
      $\min(q,r)$-th order accurate in space (for smooth solutions).
\item The scheme is well-balanced and preserves the discrete hydrostatic
      equilibrium given by \eqref{eq:intro_0050} and $v_x = 0$
      exactly.
\end{enumerate}
\end{theorem}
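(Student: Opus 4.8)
The plan is to prove the two items separately. Item (ii) (well-balancedness) is the conceptual core; item (i) (consistency and $\min(q,r)$-th order accuracy) then follows from the Remark preceding the theorem together with classical high-order finite volume arguments, so I would put most of the effort into (ii) and only sketch (i).

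For (ii), I would first make precise what ``discrete hydrostatic equilibrium'' means. Fix a global stationary state of the form \eqref{eq:intro_0050}: a constant specific entropy $s^{\star}$ and a constant $H$ so that the specific enthalpy is $h^{\star}(x) = H - \phi(x)$, together with $v_x \equiv 0$; set $\rho^{\star}(x) = \rho(h^{\star}(x),s^{\star})$, $\rho e^{\star}(x) = \rho e(h^{\star}(x),s^{\star})$, $p^{\star}(x) = p(\rho^{\star}(x),e^{\star}(x))$ and $\bU^{\star}(x) = (\rho^{\star},0,\rho e^{\star})^{T}$, so that $\partial_x p^{\star} = -\rho^{\star}\partial_x\phi$ by construction. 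The discrete equilibrium is the cell-average data $\mbU_i = \tfrac1{\Delta x}\cQ_i(\bU^{\star})$; in particular $\mrhov_{x,i} = 0$ and its third component is $\tfrac1{\Delta x}\cQ_i(\rho e^{\star})$, so by \eqref{eq:nm_1d_wbrec_0030} also $\mrhoe_i = \mE_i = \tfrac1{\Delta x}\cQ_i(\rho e^{\star})$. The key step is that the local equilibrium reconstruction reproduces $\bU^{\star}$ exactly in every cell: taking $h_{0,i} = h^{\star}(x_i)$ and $s_{0,i} = s^{\star}$ in \eqref{eq:nm_1d_wbrec_0010} gives $h_{eq,i}(x) = h^{\star}(x_i) + \phi(x_i) - \phi(x) = H - \phi(x) = h^{\star}(x)$, \emph{independently of $i$}; hence $\rho_{eq,i} \equiv \rho^{\star}$, $\rho e_{eq,i} \equiv \rho e^{\star}$, $p_{eq,i} \equiv p^{\star}$, and the pair $(h^{\star}(x_i),s^{\star})$ solves the nonlinear system \eqref{eq:nm_1d_wbrec_0020}. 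Assuming the iterative solver returns this (physically relevant) root, whose uniqueness under mild EoS hypotheses is the content of \ref{sec:appendix01} in the ideal gas case, we obtain $\bU_{eq,i}(x) = \bU^{\star}(x)$ for every $i$.

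The remainder of (ii) is then a direct computation. Since all cells share the same profile $\bU^{\star}$, the perturbation cell averages in \eqref{eq:nm_1d_wbrec_0050} are $\mbU_k - \tfrac1{\Delta x}\cQ_k(\bU_{eq,i}) = \tfrac1{\Delta x}\cQ_k(\bU^{\star}) - \tfrac1{\Delta x}\cQ_k(\bU^{\star}) = 0$ for all $k \in S_i$, so $\delta\bU_i \equiv 0$ and $\cW$ returns $\bU_i(x) = \bU^{\star}(x)$. Consequently both one-sided interface states equal $\bU^{\star}(x_{i+1/2})$, and consistency of $\cF$ gives $\bF_{i+1/2} = \bbf(\bU^{\star}(x_{i+1/2})) = (0,\,p^{\star}(x_{i+1/2}),\,0)^{T}$ because $v_x = 0$. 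On the source side, $\delta\rho_i \equiv 0$ and $p_{eq,i} \equiv p^{\star}$ reduce \eqref{eq:nm_1d_wbsrc_0010} to $\mS_{\rho v,i} = (p^{\star}(x_{i+1/2}) - p^{\star}(x_{i-1/2}))/\Delta x$, which is exactly the momentum component of $\tfrac1{\Delta x}(\bF_{i+1/2} - \bF_{i-1/2})$; the continuity components vanish on both sides, and the energy source vanishes since the reconstructed momentum is identically zero at the quadrature nodes. Hence $\tfrac{\ud\mbU_i}{\ud t} = \cL(\mbU) = 0$ in every cell, and since each stage of the SSP-RK scheme \eqref{eq:SSP-RK3} is a convex combination of states on which $\cL$ vanishes, $\mbU^{n+1} = \mbU^{n}$, i.e. the discrete equilibrium is preserved down to machine precision.

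For (i), the Remark is the crux: writing $\bU_i = \bU_{eq,i} + \delta\bU_i$, the data fed to $\cR$ in \eqref{eq:nm_1d_wbrec_0050} equals the exact cell average of $\bu - \bU_{eq,i}$ up to the $\bigO(\Delta x^{q})$ error of applying $\cQ_k$ to the smooth function $\bU_{eq,i}$; stability and $r$-th order accuracy of $\cR$ in smooth regions then give $\delta\bU_i(x) = (\bu - \bU_{eq,i})(x) + \bigO(\Delta x^{\min(q,r)})$, so $\cW$ is a $\min(q,r)$-th order accurate reconstruction, and hence (by consistency and Lipschitz continuity of $\cF$) the interface states and the numerical flux are $\min(q,r)$-accurate; the standard telescoping argument via a smooth numerical flux function shows the flux-divergence term reproduces $\partial_x\bbf$ to order $\min(q,r)$. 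For the source, the equilibrium part of \eqref{eq:nm_1d_wbsrc_0010} equals $\tfrac1{\Delta x}\int_{I_i}\partial_x p_{eq,i}\,\ud x = \tfrac1{\Delta x}\int_{I_i}(-\rho_{eq,i}\partial_x\phi)\,\ud x$ exactly, while the perturbation part is a $q$-th order quadrature of $-\delta\rho_i\partial_x\phi$ with $\delta\rho_i$ a $\min(q,r)$-accurate approximation of $\rho - \rho_{eq,i}$; together with the unchanged, likewise $\min(q,r)$-accurate, energy source this yields $\mbS_i = \mbbs_i + \bigO(\Delta x^{\min(q,r)})$, so $\cL$ has spatial truncation error of order $\min(q,r)$, which in particular implies consistency with \eqref{eq:nm_1d_0010}. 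The two places I expect to need care are the solvability and uniqueness of the local two-by-two system \eqref{eq:nm_1d_wbrec_0020} — which for a general EoS must be postulated and is only proved for the ideal gas in \ref{sec:appendix01} — and the classical ``no order loss in the flux divergence'' estimate in (i), which hinges on the essentially linear behaviour of the nonlinear reconstruction $\cR$ in smooth regions and is best handled by reference to the standard finite volume literature rather than reproduced here.
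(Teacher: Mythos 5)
Your proposal is correct and follows essentially the same route as the paper's proof: identify the global equilibrium, show the local hydrostatic reconstruction recovers it cell by cell (deferring uniqueness of the root of \eqref{eq:nm_1d_wbrec_0020} to \ref{sec:appendix01} for the ideal gas), conclude $\delta\bU_i \equiv 0$, and then use flux consistency plus the source discretization \eqref{eq:nm_1d_wbsrc_0010} to get $\cL(\mbU)=0$. You supply more detail than the paper on item (i) (which it dismisses as straightforward) and on the SSP-RK convex-combination step, but the substance is the same.
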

\begin{proof}
(i) The consistency and formal order of accuracy of the scheme is
straightforward.

(ii) Let the hydrostatic equilibrium \eqref{eq:intro_0050} be characterized by
the constant specific entropy $s$ and specific enthalpy profile $h_{eq}(x)$.
The equilibrium conserved variables are then given by
$u_{eq}(x) = [\rho(h_{eq}(x), s),0,\rho e(h_{eq}(x),s]^T$
and let $\bU_i(0) = \frac{1}{\Delta x} \cQ_i\left( \bu_{eq}\right)$ be the
discrete initial conditions.
Then the iterative process for solving \cref{eq:nm_1d_wbrec_0020} will, in
each cell, find the local equilibrium $h_{0,i} = h_{eq}(x_i)$ and
$s_{0,i} = s$.
We prove this fact for ideal gases in \ref{sec:appendix01}.
Therefore, in every cell $\delta \bU_i(x) = \cR(x; \{0\}_{k\in S_i}) = 0$.
Hence, we have $\bU_{i+1/2-} = \bU_{i+1/2+}$ and by consistency of the
numerical flux $\bF_{i+1/2} = \bbf(\bU_{i+1/2-}) = [0,p_{eq}(x_{i+1/2}),0]^T$.
Likewise, by definition \cref{eq:nm_1d_wbsrc_0010} the cell-averaged source
term becomes
$\mbS_i = \frac{1}{\Delta x} [0,p_{eq}(x_{i+1/2}) - p_{eq}(x_{i-1/2}),0]^T$.
By plugging the above expressions for the  numerical flux and source term into
the semi-discrete finite volume scheme \cref{eq:nm_1d_fv_0010} we get
\begin{equation*}
  \frac{\mathrm{d} \mbU_{i}}{\mathrm{d} t}
  = \cL(\mbU)
  = - \frac{1}{\Delta x}
        \left( \bF_{i+1/2} - \bF_{i-1/2} \right)
      + \mbS_{i}
  = 0
\end{equation*}
Thus the scheme is well-balanced as claimed.
\end{proof}

\begin{remark}
The presented scheme reduces to the second-order accurate scheme presented in
\cite{Kaeppeli2014199} by setting the quadrature rule $\cQ$ to the midpoint
rule and the reconstruction procedure $\cR$ to piecewise linear.
\end{remark}

\subsection{Extension to several space dimensions}
\label{subsec:nm_md}
We now describe the extension of our well-balanced scheme for hydrostatic
equilibrium to the multi-dimensional case.
For ease of presentation, we describe it for two dimensions and the extension
to three dimensions is straightforward.
As in the one-dimensional case, we briefly introduce a standard high-order
finite volume scheme and then detail the well-balanced scheme.

The two-dimensional Euler equations with gravity in Cartesian coordinates are
given by
\begin{equation}
  \label{eq:nm_md_0010}
    \frac{\partial \bu }{\partial t}
  + \frac{\partial \bbf}{\partial x}
  + \frac{\partial \bg }{\partial y} = \bbs
\end{equation}
with
\begin{equation}
  \label{eq:nm_md_0020}
  \bu = \begin{bmatrix}
          \rho     \\
          \rho v_x \\
          \rho v_y \\
          E
        \end{bmatrix}
  \; , \quad
  \bbf = \begin{bmatrix}
           \rho v_x       \\
           \rho v_x^2 + p \\
           \rho v_y v_x   \\
           (E + p) v_x
         \end{bmatrix}
  \; , \quad
  \bg = \begin{bmatrix}
          \rho v_y       \\
          \rho v_x v_y   \\
          \rho v_y^2 + p \\
          (E + p) v_y
        \end{bmatrix}
  \quad \mathrm{and} \quad
  \bbs = \bbs_{x} + \bbs_{y}
       = \begin{bmatrix}
             0        \\
           - \rho     \\
             0        \\
           - \rho v_x
         \end{bmatrix} \frac{\partial \phi}{\partial x}
       + \begin{bmatrix}
             0        \\
             0        \\
           - \rho     \\
           - \rho v_y
         \end{bmatrix} \frac{\partial \phi}{\partial y}
  ,
\end{equation}
where $\bu$ is the vector of conserved variables, $\bbf$ and $\bg$ the fluxes
in $x$- and $y$-direction, and $\bbs$ the gravitational source terms.

We consider a rectangular spatial domain
$\Omega = [x_{\min},x_{\max}] \times [y_{\min},y_{\max}]$ discretized
uniformly (for ease of presentation) by $N_{x}$ and $N_{y}$ cells or finite
volumes in $x$- and $y$-direction, respectively.
The cells are labeled by $I_{i,j} = I_{i} \times I_{j} = [x_{i-1/2},x_{i+1/2}] \times [y_{j-1/2},y_{j+1/2}]$
and the constant cell sizes by
$\Delta x = x_{i+1/2} - x_{i-1/2}$ and
$\Delta y = y_{j+1/2} - y_{j-1/2}$.
We denote the cell centers by $x_{i} = (x_{i-1/2} + x_{i+1/2})/2$ and
$y_{j} = (y_{j-1/2} + y_{j+1/2})/2$.
Integrals of some quantity $c$ over the cell faces are approximated by $q$-th
order accurate quadrature rules as
\begin{equation}
  \label{eq:nm_md_0021}
  \begin{aligned}
  Q_{i\pm1/2,j}(c)
  & = \sum_{\beta=1}^{N_{q}} \omega_{\beta}   ~ c(x_{i\pm1/2},y_{j,\beta})
  \approx \int_{I_{j}} c(x_{i\pm1/2},y) ~ \mathrm{d}x
\\
  Q_{i,j\pm1/2}(c)
  & = \sum_{\alpha=1}^{N_{q}} \omega_{\alpha} ~ c(x_{i,\alpha},y_{j\pm1/2})
  \approx \int_{I_{i}} c(x,y_{i\pm1/2}) ~ \mathrm{d}y
  ,
  \end{aligned}
\end{equation}
where the $x_{i,\alpha} \in I_{i}$, $y_{j,\beta} \in I_{j}$ and
$\omega_{\alpha}$, $\omega_{\beta}$ denote the $N_{q}$ quadrature nodes and
weights, respectively.
Likewise, integrals over the cells are approximated by
\begin{equation}
  \label{eq:nm_md_0022}
  Q_{i,j}(c)
  = \sum_{\alpha=1}^{N_{q}} \sum_{\beta=1}^{N_{q}}
    \omega_{\alpha} \omega_{\beta} ~ c(x_{i,\alpha},y_{j,\beta})
  \approx \int_{I_{i,j}} c(x,y) ~ \mathrm{d}x ~ \mathrm{d}y
  .
\end{equation}

A semi-discrete finite volume scheme for the numerical approximation of
\eqref{eq:nm_md_0010} then takes the following form
\begin{equation}
  \label{eq:nm_md_0030}
  \frac{\mathrm{d} \mbU_{i,j}}{\mathrm{d} t}
  = \cL(\mbU)
  = - \frac{1}{\Delta x}
      \left( \bF_{i+1/2,j} - \bF_{i-1/2,j} \right)
    - \frac{1}{\Delta y}
      \left( \bG_{i,j+1/2} - \bG_{i,j-1/2} \right)
    + \mbS_{i,j}
  ,
\end{equation}
where $\mbU_{i,j}$ denotes the approximate cell averages of the conserved
variables, $\bF_{i\pm1/2,j}$ and $\bG_{i,j\pm1/2}$ the facial averages of the
fluxes through the cell boundary and $\mbS_{i,j}$ the cell averages of the
source term.
The fluxes are obtained by applying the above quadrature rules along the cell
boundary to the numerical flux formulas $\cF$ and $\cG$ in respective
direction:
\begin{equation}
  \label{eq:nm_md_0040}
  \begin{aligned}
  \bF_{i+1/2,j} & = \frac{1}{\Delta y}
                    Q_{i+1/2,j} \left( \cF(\bU_{i  ,j},\bU_{i+1,j}) \right) \\
  \bG_{i,j+1/2} & = \frac{1}{\Delta x}
                    Q_{i,j+1/2} \left( \cG(\bU_{i,j  },\bU_{i,j+1}) \right)
  ,
  \end{aligned}
\end{equation}
where $\bU_{i,j} = \bU_{i,j}(x, y)$ is a suitable reconstruction to be defined
in detail at a later point.
Similarly, the source term is obtained by quadrature over the cell
\begin{equation}
  \label{eq:nm_md_0050}
  \mbS_{i,j} = \frac{1}{\Delta x \Delta y} \cQ_{i,j}(\bbs(\bU,\nabla\phi))
  .
\end{equation}
In the evaluation of the quadrature rules, a $r$-th reconstruction procedure
$\cR$ is used to obtain pointwise representations of the solution from the
cell-averaged conserved variables:
\begin{equation}
  \label{eq:nm_md_0051}
  \bU_{i,j}(x,y) = \cR
                     \left(
                       x,y; \left\{ \mbU_{k,l} \right\}_{(k,l) \in S_{i,j}}
                     \right)
  .
\end{equation}
Here $S_{i,j}$ is the stencil of the reconstruction for cell $I_{i,j}$.
Many such reconstruction procedures have been developed in the literature
and we refer to the references previously mentioned in section
\ref{subsubsec:nm_1d_fv}.

As in the one-dimensional case, we need two ingredients to construct our
well-balanced scheme.
The first is a high-order equilibrium preserving reconstruction and the second
is a well-balanced discretization of the momentum source terms.

Let us begin with the description of the first ingredient and consider cell
$I_{i,j}$.
Then the high-order equilibrium preserving reconstruction $\cW$ takes the
following form
\begin{equation}
  \label{eq:nm_md_0060}
  \bU_{i,j}(x,y) = \cW
                   \left(x,y; \left\{\mbU_{k,l}\right\}_{(k,l) \in S_{i,j}}
                   \right)
                 = \bU_{eq,i,j}(x,y) + \delta \bU_{i,j}(x,y)
  ,
\end{equation}
which again separates the solution into an equilibrium $\bU_{eq,i,j}$ and a
(possibly large) perturbation $\delta \bU_{i,j}$.

The equilibrium profile is built from \eqref{eq:intro_0050}, which is indeed
also valid in more than one dimensions.
Hence, we construct the local equilibrium profile in cell $I_{i,j}$ by
\begin{equation}
  \label{eq:nm_md_0070}
  h_{eq,i,j}(x,y) = h_{0,i,j} + \phi_{i,j} - \phi(x,y)
  ,
\end{equation}
where $h_{0,i,j} = h_{eq,i,j}(x_{i},y_{j})$ and
$\phi_{i,j} = \phi(x_{i},y_{j})$ are the point values of the specific enthalpy
and the gravitational potential at cell center, respectively.
Given the (constant) equilibrium entropy $s_{0,i,j}$, the equilibrium profiles
of density $\rho_{eq,i,j}$ and internal energy density $\rho e_{eq,i,j}$ can be
computed through the EoS.

The equilibrium enthalpy at cell center $h_{0,i,j}$ and the (constant) entropy
$s_{0,i,j}$ are again fixed by demanding agreement with the local cell averages
up to the desired order of accuracy:
\begin{equation}
  \label{eq:nm_md_0080}
  \begin{aligned}
  \mrho_{i,j}  & = \frac{1}{\Delta x \Delta y} \cQ_{i,j}(\rho_{eq,i,j})   \\
  \mrhoe_{i,j} & = \frac{1}{\Delta x \Delta y} \cQ_{i,j}(\rho e_{eq,i,j})
  .
  \end{aligned}
\end{equation}
Here $\mrhoe_{i,j}$ is the cell average of the internal energy density,
which we estimate simply from the cell-averaged conserved variables by
\begin{equation}
  \label{eq:nm_md_0090}
  \mrhoe_{i,j} = \mE_{i,j}
               - \frac{1}{2 \mrho_{i,j}}
                 \left(\mrhov_{x,i,j}^{2} + \mrhov_{y,i,j}^{2} \right)
  ,
\end{equation}
The latter estimate is again exact at equilibrium.
As in the one-dimensional case, these equations represent, in general, a
nonlinear system of two equations in the equilibrium specific enthalpy at cell
center $h_{0,i,j}$ and the (constant) specific entropy $s_{0,i,j}$.
Their resolution proceeds as in the one-dimensional case.
In the end, we have the following equilibrium profile
\begin{equation}
  \label{eq:nm_md_0100}
  \bU_{eq,i,j}(x,y) = \begin{bmatrix}
                        \rho_{eq,i,j}(x,y)   \\
                        0                    \\
                        0                    \\
                        \rho e_{eq,i,j}(x,y)
                      \end{bmatrix}
  .
\end{equation}

The perturbation part is reconstructed as in the one-dimensional case by
\begin{equation}
  \label{eq:nm_md_0110}
  \delta \bU_{i,j}(x,y) = \cR
                          \left(
                            x,y; \left\{  \mbU_{k,l}
                                        - Q_{k,l}(\bU_{eq,i,j})
                                 \right\}_{(k,l) \in S_{i,j}}
                          \right)
  .
\end{equation}
This simply extrapolates the cell's local equilibrium profile, computes
equilibrium cell averages by numerical integration, and uses the standard
reconstruction procedure to obtain a high-order representation of the
perturbation.

We observe that the reconstruction procedure \eqref{eq:nm_md_0060} preserves
the equilibrium by construction, since $\delta \bU_{i,j}$ vanishes, and it is
$\min(q,r)$-th order accurate in and away from equilibrium (for sufficiently
smooth solutions).

Like in the one-dimensional case, only the momentum source terms need to be
modified.
The well-balanced momentum source terms are simply obtained on a
dimension-by-dimension basis
\begin{equation}
  \label{eq:nm_md_0120}
  \begin{aligned}
  \mS_{\rho v_{x},i,j}
  & = \frac{1}{\Delta x}
      \left( Q_{i+1/2,j}(p_{eq,i,j}) - Q_{i-1/2,j}(p_{eq,i,j}) \right)
    - \frac{1}{\Delta x \Delta y} \cQ_{i,j}
      \left( \delta\rho_{i,j} \frac{\partial \phi}{\partial x} \right) \\
  \mS_{\rho v_{y},i,j}
  & = \frac{1}{\Delta y}
      \left( Q_{i,j+1/2}(p_{eq,i,j}) - Q_{i,j-1/2}(p_{eq,i,j}) \right)
    - \frac{1}{\Delta x \Delta y} \cQ_{i,j}
      \left( \delta\rho_{i,j} \frac{\partial \phi}{\partial y} \right)
  .
  \end{aligned}
\end{equation}
This completes the description of the two-dimensional well-balanced scheme
for hydrostatic equilibrium and its properties are summarized in the
corollary below:
\begin{corollary}
\label{cor:nm_md}
Consider the scheme \eqref{eq:nm_md_0030} with consistent and Lipschitz
continuous numerical fluxes $\cF$ and $\cG$, a $r$-th order accurate spatial
reconstruction procedure $\cR$, a $q$-th order accurate quadrature rule $\cQ$,
the hydrostatic reconstruction $\cW$ \eqref{eq:nm_md_0060} and the
gravitational source term $\cS$ \eqref{eq:nm_md_0050} (with
\eqref{eq:nm_md_0120}).

This scheme has the following properties:
\begin{enumerate}[(i)]
\item The scheme is consistent with \eqref{eq:nm_md_0010} and it is
      $\min(q,r)$-th order accurate in space (for smooth solutions).
\item The scheme is well-balanced and preserves the discrete hydrostatic
      equilibrium given by \eqref{eq:intro_0050} and $v_x = v_y = 0$
      exactly.
\end{enumerate}
\end{corollary}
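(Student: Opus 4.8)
The plan is to transcribe the proof of \cref{thm:nm_1d} almost verbatim; the only genuinely new element is the tensor‑product quadrature along faces and over cells, which enters purely as bookkeeping.

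For part (i), I would establish consistency and the $\min(q,r)$ order exactly as in one dimension. The reconstruction \eqref{eq:nm_md_0060} writes $\bU_{i,j}(x,y)$ as the smooth profile $\bU_{eq,i,j}$ plus the standard $r$‑th order reconstruction $\cR$ applied to the cell averages of $\bu-\bU_{eq,i,j}$; since $\bU_{eq,i,j}$ is a fixed smooth function, adding and subtracting it cannot lower the order (the same ``any function equals a smooth function plus its difference'' argument used after \eqref{eq:nm_1d_wbrec_0060}), so $\cW$ is $\min(q,r)$‑th order accurate and, in particular, the merely second‑order estimate \eqref{eq:nm_md_0090} of $\mrhoe_{i,j}$ is irrelevant to the overall order. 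The facial fluxes \eqref{eq:nm_md_0040} are $q$‑point quadratures of the consistent numerical fluxes $\cF$, $\cG$ evaluated at $\min(q,r)$‑th order accurate interface states, hence consistent and $\min(q,r)$‑th order accurate; and in \eqref{eq:nm_md_0120} the equilibrium part of the momentum source replaces the exact cell integral of $-\rho_{eq,i,j}\,\partial_x\phi=\partial_x p_{eq,i,j}$ (the identity $\nabla p_{eq,i,j}=-\rho_{eq,i,j}\nabla\phi$ holding by construction in multi‑D as well, from \eqref{eq:nm_md_0070} together with $\ud h=T\,\ud s+\ud p/\rho$ at constant entropy) by a telescoping difference of $q$‑point facial quadratures of $p_{eq,i,j}$, plus a $q$‑point quadrature of the perturbation part $-\delta\rho_{i,j}\,\partial_x\phi$; this is again consistent and $\min(q,r)$‑th order accurate. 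Hence $\cL$ is consistent with \eqref{eq:nm_md_0010} and formally $\min(q,r)$‑th order accurate in space.

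For part (ii), I would take an equilibrium characterized by constant specific entropy $s$ and an enthalpy profile with $h_{eq}(x,y)+\phi(x,y)\equiv C$, so that $\bu_{eq}(x,y)=[\rho(h_{eq},s),\,0,\,0,\,\rho e(h_{eq},s)]^T$, and initialize $\mbU_{i,j}(0)=\tfrac{1}{\Delta x\,\Delta y}\cQ_{i,j}(\bu_{eq})$. The first step is to show that in every cell the nonlinear system \eqref{eq:nm_md_0080} is solved by $h_{0,i,j}=h_{eq}(x_i,y_j)$ and $s_{0,i,j}=s$: substituting these into \eqref{eq:nm_md_0070} gives $h_{eq,i,j}(x,y)=h_{eq}(x_i,y_j)+\phi(x_i,y_j)-\phi(x,y)=C-\phi(x,y)=h_{eq}(x,y)$, whence $\rho_{eq,i,j}$ and $\rho e_{eq,i,j}$ agree with $\bu_{eq}$ pointwise and therefore, after applying $\cQ_{i,j}$, with the initial cell averages; that the Newton iteration actually selects this solution is the uniqueness statement proved for ideal gases in \ref{sec:appendix01} (and assumed for a general EoS). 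The key consequence is that the \emph{local} equilibrium profile coincides with the \emph{global} one, $\bU_{eq,i,j}\equiv\bu_{eq}$, in particular on every neighbour cell $I_{k,l}$, so all perturbation cell averages in \eqref{eq:nm_md_0110} vanish, $\delta\bU_{i,j}=\cR(x,y;\{0\})\equiv 0$, and hence $\bU_{i,j}\equiv\bu_{eq}$. Then, with identical reconstructions on both sides of every face, consistency of $\cF$ and $\cG$ reduces the numerical fluxes at each face quadrature node to $\bbf(\bu_{eq})=[0,\,p_{eq},\,0,\,0]^T$ and $\bg(\bu_{eq})=[0,\,0,\,p_{eq},\,0]^T$ (the convective terms vanishing since $v_x=v_y=0$), with $p_{eq}$ equal to $p_{eq,i,j}$ on that face; so the only nonzero flux contributions in \eqref{eq:nm_md_0030} are telescoping differences of facial $p_{eq,i,j}$ quadratures in the $\rho v_x$ and $\rho v_y$ slots. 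Since $\delta\rho_{i,j}\equiv 0$, the perturbation quadratures in \eqref{eq:nm_md_0120} drop out and $\mbS_{i,j}$ reduces, in exactly those two slots, to the same telescoping differences with opposite sign, the mass and energy components being zero in both the flux and the source terms. Adding the contributions dimension by dimension yields $\cL(\mbU)=\mathrm{d}\mbU_{i,j}/\mathrm{d}t=0$, so the SSP‑RK update is stationary and the discrete equilibrium is preserved exactly.

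I expect the only delicate point to be the exact, dimension‑by‑dimension cancellation between the equilibrium pressure surface terms coming from the numerical fluxes and the hydrostatically reconstructed momentum source \eqref{eq:nm_md_0120}. This works for \emph{any} consistent numerical flux and \emph{any} $q$‑point quadrature precisely because \eqref{eq:nm_md_0120} uses the very same facial quadrature rules $Q_{i\pm1/2,j}$, $Q_{i,j\pm1/2}$, applied to the same pressure profile $p_{eq,i,j}$, that appear — through consistency of $\cF$ and $\cG$ — in the fluxes at equilibrium; once one checks that the normalizations line up, the remainder is mechanical.
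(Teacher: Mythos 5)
Your proof is correct and follows essentially the same route as the paper, which simply states that the corollary follows by applying Theorem~\ref{thm:nm_1d} dimension-by-dimension; you have merely written out that application in full (local equilibrium coincides with the global one, perturbations vanish, fluxes reduce to equilibrium pressure terms, and the facial pressure quadratures telescope against the source term \eqref{eq:nm_md_0120}). Your closing remark about checking that normalizations line up is well taken — the cancellation requires the equilibrium part of \eqref{eq:nm_md_0120} to carry the same $1/(\Delta x\,\Delta y)$ prefactor as the flux difference in \eqref{eq:nm_md_0030} — but this is a bookkeeping point, not a gap in the argument.
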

\begin{proof}
The proof follows directly by applying theorem \ref{thm:nm_1d}
dimension-by-dimension.
\end{proof}


\section{Numerical Experiments}
\label{sec:numex}
In this section we assess the performance of our well-balanced scheme on a
series of numerical experiments. For comparison, we also present results
obtained with a standard (unbalanced) base scheme.
The fully-discrete finite volume base scheme consists of
\begin{itemize}
\item the temporally third-order accurate SSP-RK scheme for time integration
      (see \cite{GottliebShuTadmor2001}),
\item the spatially  third-order accurate CWENO3 \cite{Levy2000} reconstruction
      procedure $\cR$,
\item the spatially fourth-order accurate two-point Gauss-Legendre quadrature
      rule for $\cQ$.
\end{itemize}
Overall the scheme is third-order accurate in space and time. This scheme is
conditionally stable under the usual CFL condition. We use a CFL number of
$C_{\mathrm{CFL}} = 0.85$. In the following, we will refer to this scheme as the
unbalanced scheme. The well-balanced scheme is built with the same base
components, but uses the well-balanced reconstruction procedure and source term
computation as outlined in the previous section.

Below, all the initial conditions will be given in functional form
$\bu_0(\bx)$.
The discrete initial conditions are obtained simply by quadrature,
i.e.\
\begin{align}
  \mbU^0_i = \cQ_i(\bu_0), \quad
  \mbU^0_{i,j} = \cQ_{i,j}(\bu_0)
\end{align}
in the one- and two-dimensional case, respectively.
It is important to notice that the well-balancing only requires that the
initial conditions are obtained by the exact same quadrature rule used in the
numerical scheme.
Therefore, showing that the initial conditions are well-balanced will
immediately imply that the preserved discrete state is a high-order accurate
approximation of the exact equilibrium, simply because the discrete initial
conditions are nothing else than a high-order quadrature of the exact
equilibrium.


We will be using three distinct notions of ``error''.
The first error is the usual $L^1$-error
\begin{align}
  err_1(q) := \sum_{i} \Delta x ~ \abs{ \mq_i - \mq_{ref, i} }
  ,
\end{align}
where $q$ is any scalar variable of interest, e.g. $q = \rho, p, v, \dots$.
Furthermore, $\mq_{ref,i}$ is computed by down-sampling a high-resolution
reference solution or, where available, a highly accurate quadrature of an
analytic solution. A subtlety is that even in a well-balanced scheme the
$err_{1}$ of a discrete preserved state is not, in general, zero.

To answer the question of how big the error of a perturbation $\delta q$ from
equilibrium is, we define the $L^1$-error of $\delta q$ as
\begin{align}
  err_1(\delta q)
 := \sum_{i} \Delta x ~ \abs{ (\mq_i - \cQ_{i}(q_{eq})) - \delta \mq_{ref,i} }
\end{align}
where $q_{eq}$ is the background equilibrium profile and $\delta \mq_{ref,i}$ is
the cell-average of the perturbation in a reference solution. This measures the
error of the perturbation from numerical equilibrium. This is subtly different
than the error of the perturbation from the exact equilibrium. The difference is
that $err_1(\delta q)$ conveniently uses the quadrature rule used in the finite
volume method to compute the average of the equilibrium profile, i.e.
$\cQ_{i,j} q_{eq}$. Therefore, for equilibria, a well-balanced scheme should
have zero $err_1(\delta q)$, but may have non-zero $err_1(q)$.

When computing the $err_{1}(\delta q)$ for hydrostatic equilibria, the reference
solution $\cQ_{i,j}(q_{eq})$ is known exactly, it's simply the initial
condition. Therefore, $err_1(\delta q)$ can be computed at a greatly reduced
computational cost by
\begin{align}
  err_{eq,1}(q) := \sum_{i} \Delta x \abs{\mq_i - \cQ_{i}(q_{eq})}
\end{align}
In order to be clear about how the errors where computed we will make the
distinction throughout the numerical experiments.
Moreover, the above error measures readily generalize to the two-dimensional
case.

To characterize a time scale on which a model reacts to perturbations of its
equilibrium, we define the sound crossing time $\tau_{\mathrm{sound}}$
\begin{equation}
  \tau_{\mathrm{sound}} = 2 \int c_s^{-1}\mathrm{d}x
  ,
\end{equation}
where $c_s$ denotes the speed of sound and the integral has to be taken over the
extent of the stationary state of interest. The sound crossing time is basically
the time in which a sound wave travels back and forth through the equilibrium.

We begin by several simple one- and two-dimensional numerical experiments
employing the ideal gas EoS.
The interested reader may readily reproduce these experiments in order to
check his or her implementation.
Finally, we demonstrate the performance of the scheme on a problem involving
a complex multiphysics EoS.

\subsection{One-dimensional Tests}
\label{sec:one_dim_tests}
We consider an isentropic hydrostatic atmosphere in a constant gravitational
field.
The gravitational potential is a linear function $\phi(x) = gx$ where $g$ is
the constant gravitational acceleration.
The initial density and pressure profiles are then given by
\begin{equation}
  \begin{aligned}
    \rho_0(x) = \left(\frac{1}{K} \frac{\gamma-1}{\gamma}
                      (h_{0} - gx)\right)^{1/\gamma-1}
    ,\quad
    p_0(x) = K \rho_0(x)^\gamma
           + A \exp\left(- \frac{(x - 1/2)^2}{0.05^2}\right).
  \end{aligned}
\end{equation}
with the constants $g = 3.15$, $\gamma = 1.4$, $h_0 = 3.75$ and $K = 1$.
The atmosphere's pressure is perturbed by a Gaussian bump of amplitude $A$.
The velocity is set to zero everywhere.

The computational domain is set to $[0,L]$ with $L = 1$ and uniformly
discretized by $N$ cells, i.e.\ we set the cell size $\Delta x = L/N$, the cell
interfaces $x_{i+1/2} = i \Delta x$ and the cell centers
$x_{i} = (x_{i-1/2} + x_{i+1/2})/2$ for $i=1,...,N$.
The following resolutions are used $N = 32,64,128,256,512,1024$.

The boundary conditions are treated as follows.
We extrapolate the local equilibrium from the last physical cell into the left
and right ghost cells by
\begin{equation}
  \begin{aligned}
  \mbU_{i} & = Q_{i}(\bU_{eq,1}) \quad \text{for} \quad i < 1 \\
  \mbU_{i} & = Q_{i}(\bU_{eq,N}) \quad \text{for} \quad i > N
  .
  \end{aligned}
\end{equation}

\subsubsection{Well-balanced property}
\label{subsubsec:one_dim_tests_wb}
We first verify the well-balanced property of our scheme.
For this we evolve the isentropic atmosphere in hydrostatic equilibrium without
pressure perturbation, $A = 0$, up to time $t = 10$. This corresponds to roughly
$6$ sound crossing time ($\tau_{\mathrm{sound}} = 1.6$). The numerical errors
for the density at final time are shown in \cref{tab:one_dim_eq}. The table
clearly shows that the well-balanced scheme maintains the discrete stationary
state to machine precision. Since the initial conditions are the two-point
Gauss-Legendre quadrature of the exact equilibrium, this furthermore shows that
the preserved state is a fourth order accurate approximation of the exact
equilibrium. The unbalanced scheme produces large errors and is unable to
maintain the hydrostatic equilibrium accurately.


\begin{table}[hbt]
  \begin{center}
    \begin{tabular}{r
                S[table-format=3.2e2]r
                S[table-format=3.2e2]r
}
\toprule
N & 
\multicolumn{2}{c}{\cwenonaive} & 
\multicolumn{2}{c}{\cwenoisentropic} \\
 & 
\multicolumn{1}{c}{$err_{eq,1}(\rho)$} & 
\multicolumn{1}{c}{rate} & 
\multicolumn{1}{c}{$err_{eq,1}(\rho)$} & 
\multicolumn{1}{c}{rate} \\
\midrule
  32 &  8.73e-04  &      --  &  9.85e-16  &      --   \\
  64 &  1.38e-04  &     2.66 &  3.96e-15  &    -2.01  \\
 128 &  1.21e-05  &     3.51 &  1.83e-15  &     1.11  \\
 256 &  1.21e-06  &     3.33 &  3.18e-15  &    -0.80  \\
 512 &  1.25e-07  &     3.27 &  4.49e-15  &    -0.50  \\
1024 &  1.31e-08  &     3.25 &  8.34e-15  &    -0.89  \\
\bottomrule
\end{tabular}
    \caption{Convergence data for the one-dimensional test
      \cref{subsubsec:one_dim_tests_wb} without perturbation. We show $err_{eq,1}(\rho)$ at $t
      = 10.0$. On the left we show the error for the unbalanced scheme. It converges
      at slightly higher rates than expected. This is likely due to the fact that the
      initial conditions are a fourth order approximation of the exact cell-averages.
      Clearly, the numerical solution isn't stationary and the truncation error has
      accumulated over time. The right hand side column shows the errors for the
      well-balanced scheme. Note that the errors are at the level of round-off. This
      result also implies that the preserved discrete state is a fourth order
      approximation of the exact equilibrium.}
    \label{tab:one_dim_eq}
  \end{center}
\end{table}

\subsubsection{Small pressure perturbation propagation}
\label{subsubsec:one_dim_tests_small}
Next we add a small pressure perturbation to the isentropic atmosphere in order
to examine the schemes ability to propagate small waves.
The amplitude of the pressure perturbation is set to $A = 10^{-7}$, which generates
one smooth wave propagating upwards and one downwards through atmosphere.
As the waves propagate, they are modified by the density and pressure
stratification of the atmosphere.
We evolve the setup until time $t= 0.2$, shortly before the waves reach the
boundaries.

The errors of the density perturbation $err_1(\delta \rho)$ are shown in
\cref{tab:one_dim_smooth}. The density perturbation is the density at the final
time minus the density of the unperturbed atmosphere. These errors were obtained
on the basis of a reference solution computed by the unbalanced scheme with a
high resolution $N = \num{32768}$. We observe that the errors of the
well-balanced scheme are roughly four orders of magnitude smaller than the
unbalanced scheme. The convergence rate of both the unbalanced and well-balanced
reach the expected rate of three. The somewhat irregular convergence rates of
the unbalanced scheme can be explained by the scheme still being (heavily)
pre-asymtotic at the lower resolutions. The slow convergence rate of the
well-balanced scheme is a feature of the well-balanced scheme. Since it was
designed to have very small errors close to equilibrium.

In \cref{fig:one_dimensional_smooth} the profile of the velocity and the
pressure perturbation are shown at the final time for both the unbalanced (blue
crosses) and well-balanced (red circle) schemes. The well-balanced solution is
shown for $N = 64$. Even at this relatively low resolution the well-balanced
scheme resolves the perturbation well. The errors of the unbalanced scheme for
$N=64$ are too small to be shown on the same plot. Instead we plot the solution
of the unbalanced scheme at $N = 256$. Even at this increased resolution the
perturbation is not approximated well and spurious drifts have developed during
this short period of time.

\begin{table}[hbt]
  \begin{center}
    \begin{tabular}{r
                S[table-format=3.2e2]r
                S[table-format=3.2e2]r
}
\toprule
N & 
\multicolumn{2}{c}{\cwenonaive} & 
\multicolumn{2}{c}{\cwenoisentropic} \\
 & 
\multicolumn{1}{c}{$err_{1}(\delta\rho)$} & 
\multicolumn{1}{c}{rate} & 
\multicolumn{1}{c}{$err_{1}(\delta\rho)$} & 
\multicolumn{1}{c}{rate} \\
\midrule
  32 &  5.84e-06  &      --  &  3.02e-09  &      --   \\
  64 &  6.19e-07  &     3.24 &  8.12e-10  &     1.89  \\
 128 &  1.79e-07  &     1.79 &  1.34e-10  &     2.60  \\
 256 &  3.37e-08  &     2.41 &  1.84e-11  &     2.86  \\
 512 &  4.90e-09  &     2.78 &  2.40e-12  &     2.94  \\
1024 &  6.24e-10  &     2.97 &  3.02e-13  &     2.99  \\
\bottomrule
\end{tabular}
    \caption{Convergence data for the one-dimensional test
      \cref{subsubsec:one_dim_tests_small} with the small perturbation $A =
      10^{-7}$. We show the error of the density perturbation $err_1(\delta \rho)$ at
      $t = 0.2$. On the left we show the errors for the unbalanced scheme. The first
      observation is that the errors are large and even exceed the size of the
      perturbation, the second is that the convergence rates are not quite as
      expected. The reason for the second observation is that given the very small
      perturbation we are trying to resolve, the scheme is likely still pre-asymptotic.
      The errors of the perturbation for the well-balanced scheme is given in the
      right hand side column. The overall error is much smaller, and less than the
      amplitude of the perturbation. Furthermore, the scheme converges at the expected
      rate.
    }
    \label{tab:one_dim_smooth}
  \end{center}
\end{table}

\begin{figure}[hbt]
  \begin{center}
    \includegraphics[width=0.45\columnwidth]{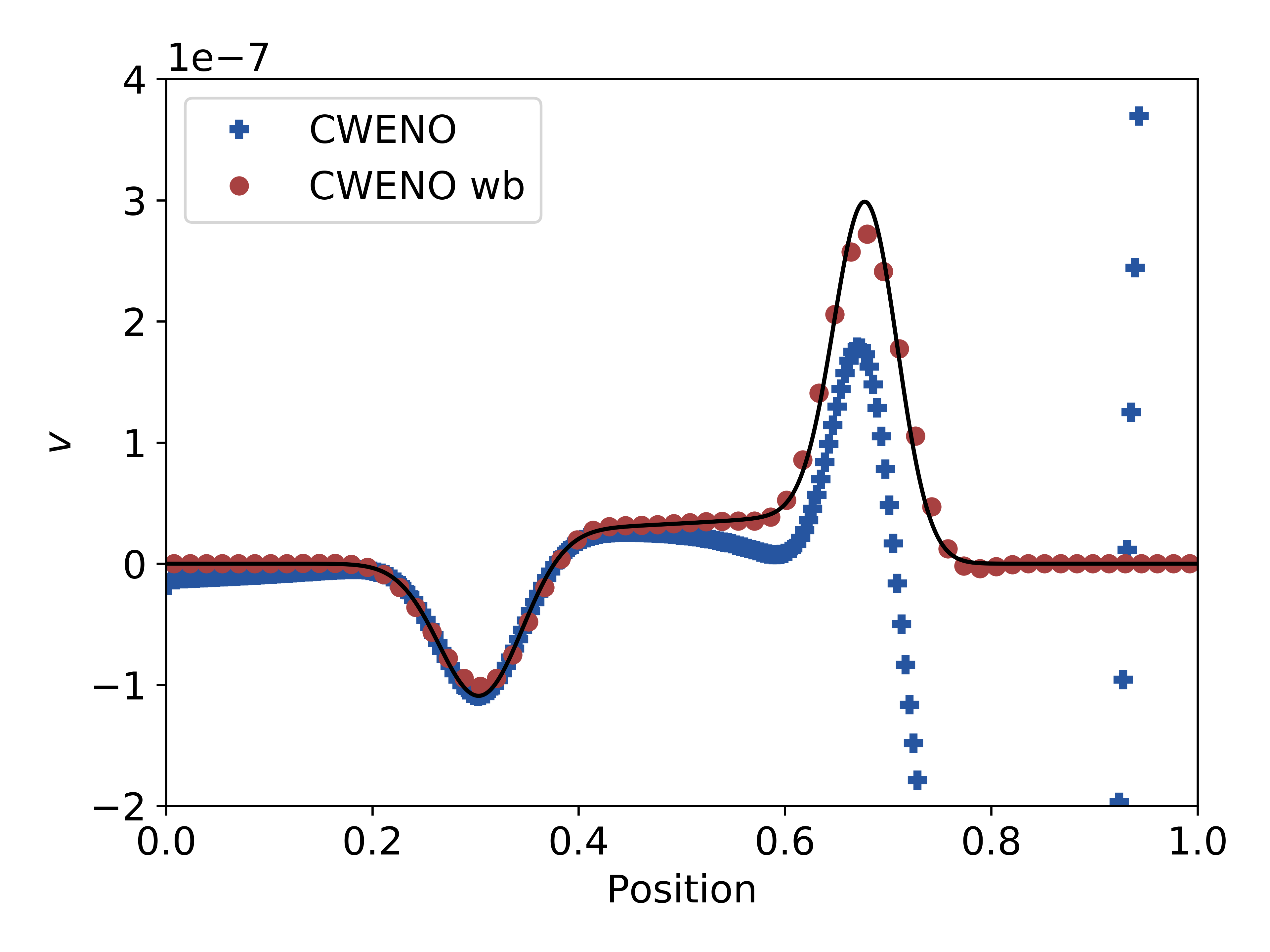}
    \includegraphics[width=0.45\columnwidth]{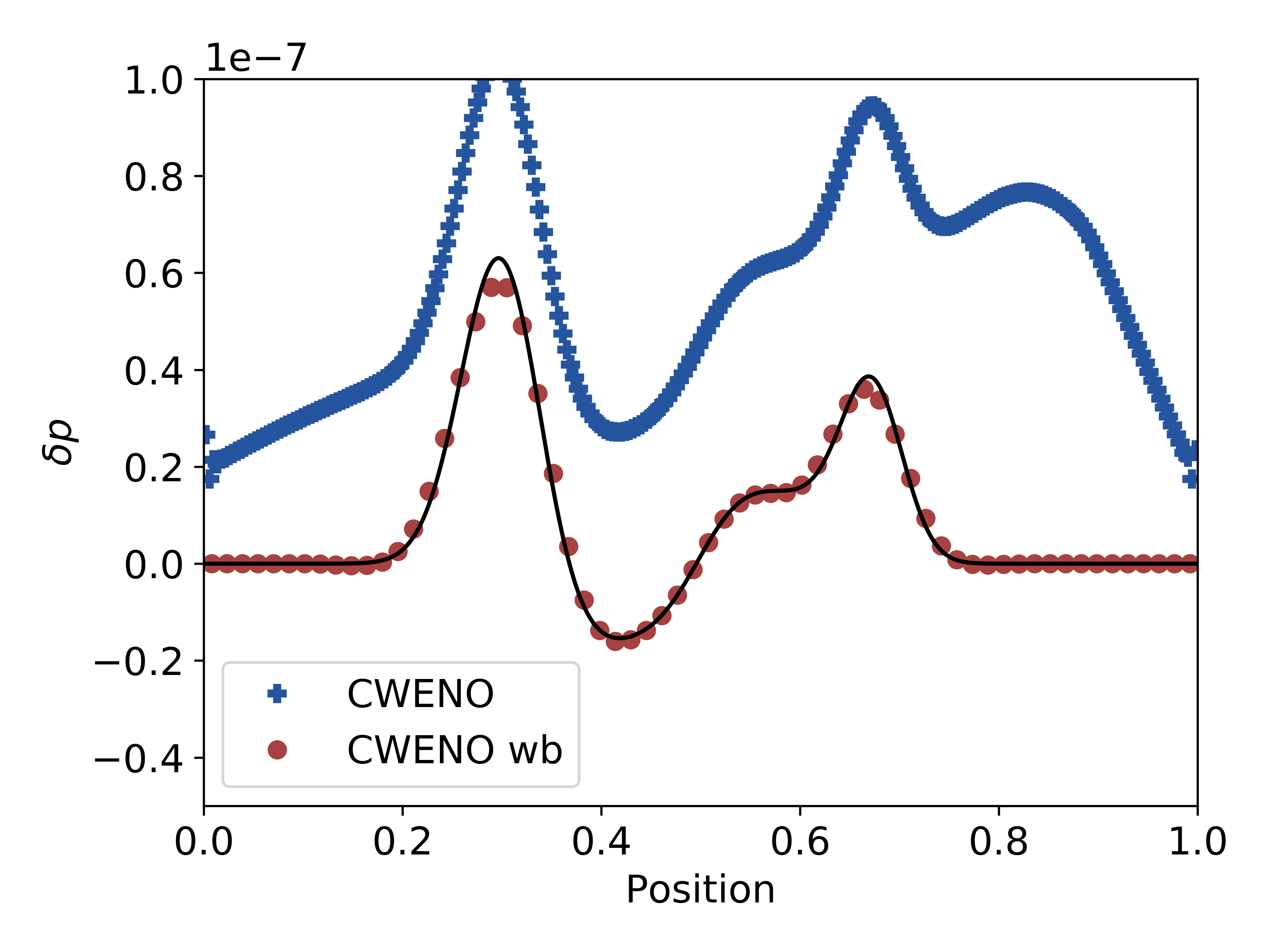}
  \end{center}
  \vspace{-0.2in}
  \caption{Snapshot of the smooth test case, see
\cref{subsubsec:one_dim_tests_small}. The simulation is performed with $N = 256$
and $N=64$ cells for the unbalanced and well-balanced scheme respectively. It is
run up to time $t = 0.2$. On the left we show the velocity, on the right the
pressure perturbation. Even with $N=64$ cells, the well-balanced scheme can
resolve the small perturbation cleanly. The error in the unbalanced scheme on
the other hand is too big to be shown on the plot. The reference solution,
plotted in black, is obtained by a high-resolution $N=\num{32768}$ simulation
using the unbalanced scheme.
}
  \label{fig:one_dimensional_smooth}
\end{figure}

\subsubsection{Large pressure perturbation propagation}
\label{subsubsec:one_dim_tests_large}
For the purpose of testing that the well-balanced reconstruction does not
destroy the robustness of the shock-capturing base scheme, we increase the
pressure perturbation by several orders of magnitude to $A = 10$.
This generates two strong waves quickly steepening into shock waves.
The setup is evolved until time $t = 0.06$.

The plots of the velocity and pressure are shown in
\cref{fig:one_dimensional_blast}.
The two schemes are virtually indistinguishable by eye.
In particular, the well-balanced scheme does not show any oscillations and
performs equally well as the underlying unbalanced scheme.
The well-balancing has not adversely affected the performance of the scheme
away from equilibrium.

\begin{figure}[hbt]
  \begin{center}
    \includegraphics[width=0.45\columnwidth]{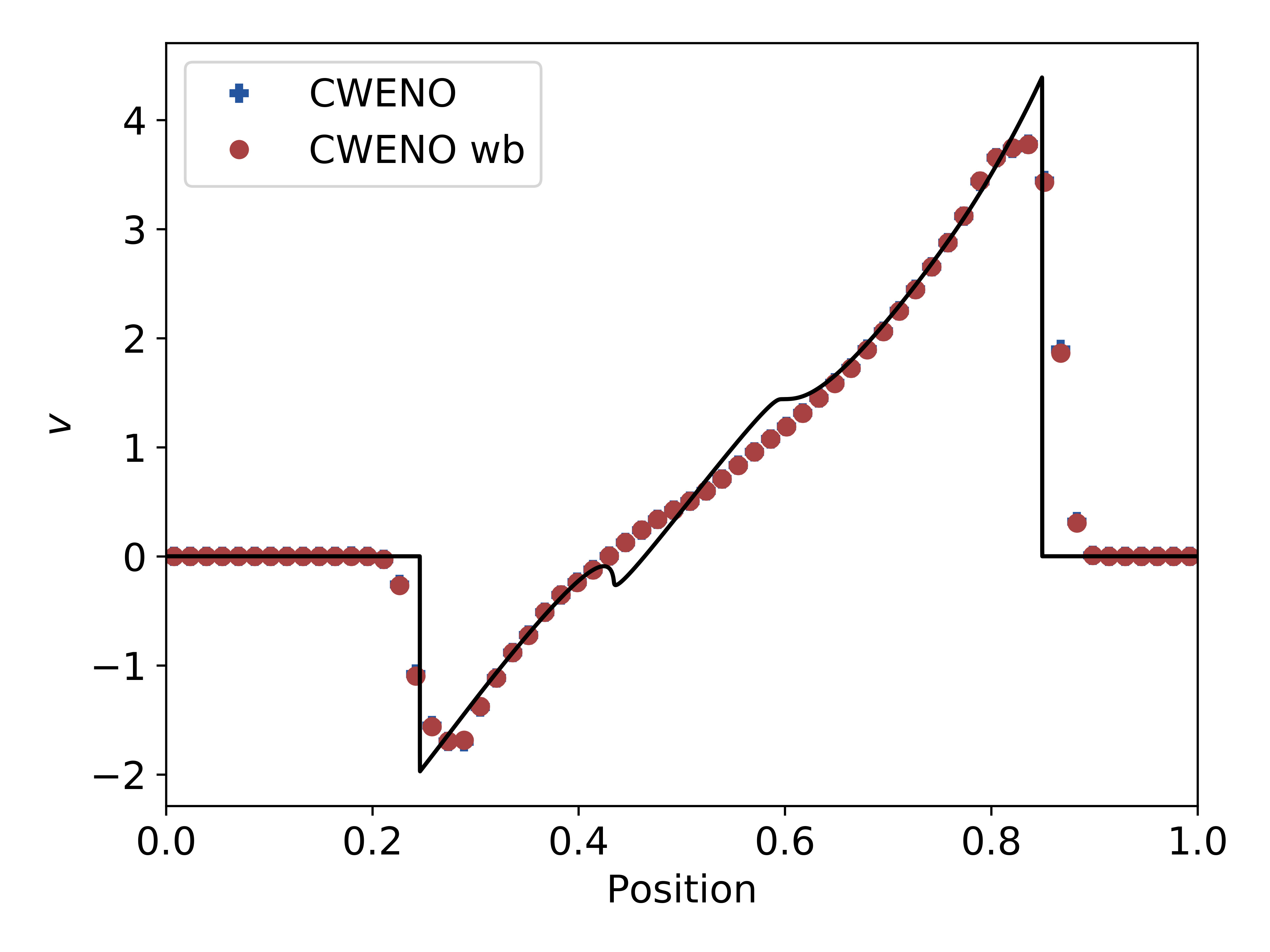}
    \includegraphics[width=0.45\columnwidth]{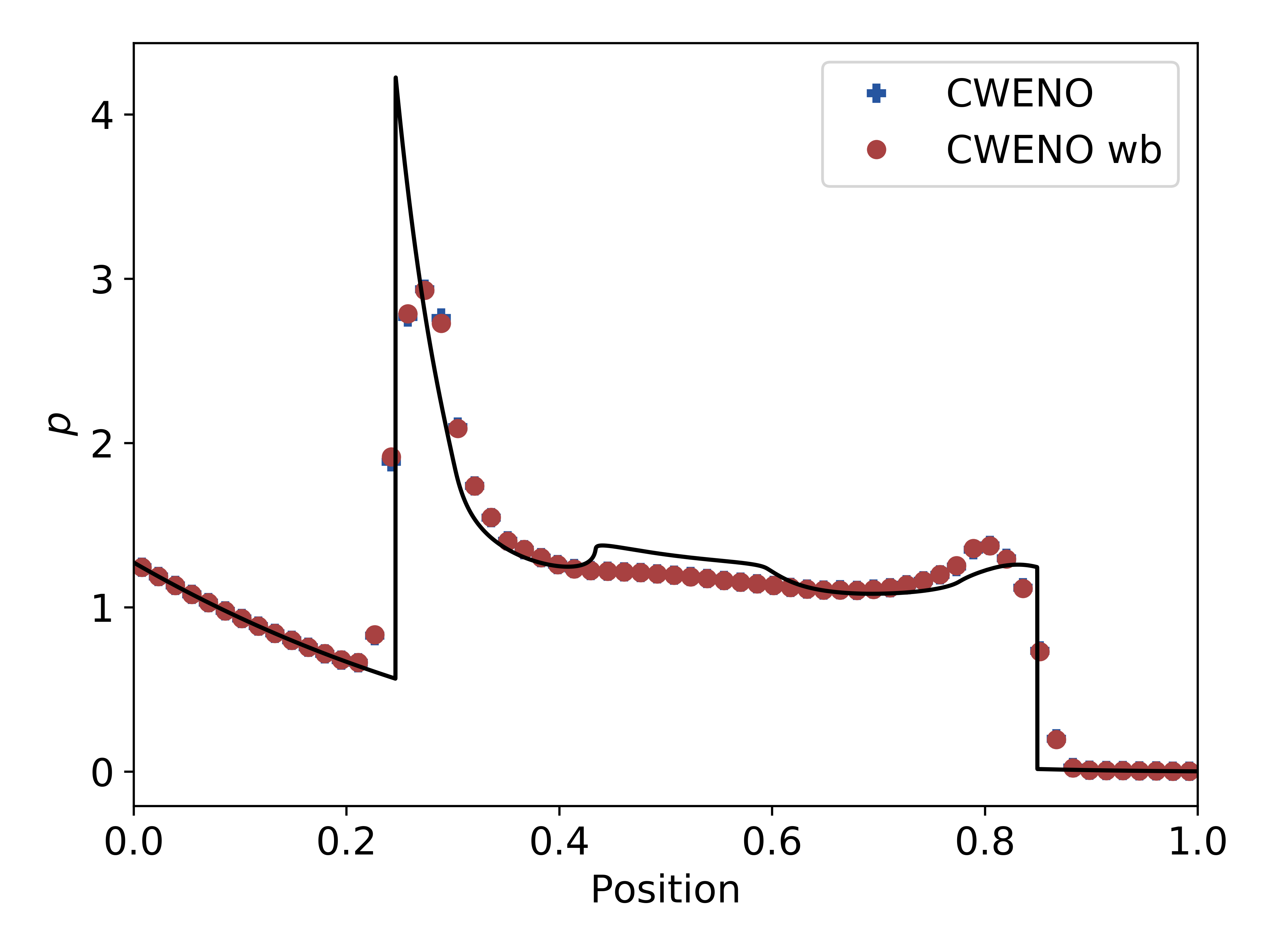}
  \end{center}
  \vspace{-0.2in}
  \caption{Snapshot of one-dimensional test with large perturbation $A = 10$,
see \cref{subsubsec:one_dim_tests_large}. The simulation is performed with $N=64$ cells and
run up to time $t = 0.06$. On the left the velocity is shown, on the right the
pressure. The well-balanced scheme is nearly indistinguishable from the
unbalanced scheme. Clearly, the well-balancing has not affected the quality of
the numerical solution away from equilibrium. Furthermore, no spurious
oscillations are observed. The reference solution, plotted in black, is obtained
by a high-resolution $N=\num{32768}$ simulation using the unbalanced scheme.}
  \label{fig:one_dimensional_blast}
\end{figure}

\clearpage

\subsection{Two-dimensional polytrope}
\label{subsec:numex_2dpoly}
The following numerical experiment is a two-dimensional version of the one
in \cite{Kaeppeli2014199}.
This experiment simulates a so-called polytrope, which is a static
configuration of an adiabatic gaseous sphere held together by self-gravitation.
These model stars are constructed in spherical symmetry from hydrostatic
equilibrium, Poisson's equation and the polytropic relation
$p = K \rho^{\gamma}$, which can be combined into the so-called Lane-Emden
equation (see e.g.\ \cite{1967aits.book.....C}).
The latter equation can be solved analytically for three values of the ratio
of specific heats ($\gamma = 6/5, 2, \infty$).

As in \cite{Kaeppeli2014199} we use $\gamma = 2$.
Then the density and pressure profiles are given by
\begin{align}
  \rho_{0}(r) = \rho_C \frac{\sin(\alpha r)}{\alpha r}, \quad
  p_{0}(r) = K \rho_0(r)^\gamma
\end{align}
where $r$ is the radial coordinate, $\rho_{C}$ is the central density of the
polytrope and
\begin{align}
  \alpha = \sqrt{\frac{4 \pi G}{2K}}
  .
\end{align}
The gravitational potential is given by
\begin{align}
  \phi(r) = - 2 K \rho_C \frac{\sin(\alpha r)}{\alpha r}
  .
\end{align}
In the following we set $K = G = \rho_{C} = 1$.
Note that the polytrope (obviously) fulfills the equilibrium
\cref{eq:intro_0050} for any $r \geq 0$.

We then discretize the problem on the computational domain $[-0.5, 0.5]^2$ by
$N^2$ uniform cells for $N = 32,64,128,256,512,1024$. The conserved variables
are initialized by numerical integration of the conserved variables
$\bu_{0}(x,y) = [\rho_{0}(r),0,0,p_{0}(r)/(\gamma - 1)]^T$ where the radial
coordinate is given by $r^2 = x^2 + y^2$. Note that the velocity is set to zero
in the whole domain.

The boundary conditions are applied along the coordinates axes as in
\cref{sec:one_dim_tests}.
In the corner boundaries (needed by the reconstruction procedure), we
extrapolate the equilibrium from the relevant corner cell in the computational
domain.
For example, the ghost cells in the upper right corner are set as follows
\begin{equation}
  \mbU_{i,j} = Q_{i,j}(\mbU_{eq,N,N})
  \quad \text{for} \quad N < i,j
  .
\end{equation}
The gravitational potential is simply given by the above analytical expression.

\subsubsection{Well-balanced property}
\label{subsubsec:numex_2dpoly_wb}
We begin by evolving the polytrope with the well-balanced and unbalanced
scheme until time $t = 30$ which corresponds to roughly $35$ sound-crossing
times ($\tau_{sound} \approx 0.85$).
The errors are shown in \cref{tab:poly_at_rest}.
The results show that our scheme is well-balanced in two dimensions.
Note that this again implies that the scheme approximates the exact
equilibrium to fourth order in the (usual) $L^1$ norm.
Furthermore, it also works for equilibria which are not grid aligned.
The unbalanced scheme, however, suffers from large spurious deviations.

\begin{table}[hbt]
  \begin{center}
    \begin{tabular}{r
                S[table-format=3.2e2]r
                S[table-format=3.2e2]r
}
\toprule
N & 
\multicolumn{2}{c}{\cwenonaive} & 
\multicolumn{2}{c}{\cwenoisentropic} \\
 & 
\multicolumn{1}{c}{$err_{eq,1}(\rho)$} & 
\multicolumn{1}{c}{rate} & 
\multicolumn{1}{c}{$err_{eq,1}(\rho)$} & 
\multicolumn{1}{c}{rate} \\
\midrule
  32 &  1.57e-03  &      --  &  2.72e-11  &      --   \\
  64 &  1.85e-04  &     3.09 &  4.33e-13  &     5.97  \\
 128 &  2.22e-05  &     3.06 &  5.29e-14  &     3.03  \\
 256 &  2.66e-06  &     3.06 &  1.06e-13  &    -1.00  \\
 512 &  3.07e-07  &     3.12 &  2.04e-13  &    -0.95  \\
1024 &  3.31e-08  &     3.21 &  3.94e-13  &    -0.95  \\
\bottomrule
\end{tabular}
    \caption{Convergence data for the polytrope at rest, see
      \cref{subsubsec:numex_2dpoly_wb}.
      We show $err_{eq,1}(\rho)$ for the unbalanced scheme (left) and the
      well-balanced scheme (right). The unbalanced scheme converges at the
      expected rate, but even with $1024^2$ cells, it has not reached round off.
      The well-balanced scheme is again shown to be in fact well-balanced. Like
      in the one-dimensional test, this implies that the preserved discrete
      state is fourth order accurate. This shows that the scheme
      also works in two-dimensions, even in cases where the gravity is
      non-constant and not grid-aligned.}
    \label{tab:poly_at_rest}
  \end{center}
\end{table}

\subsubsection{Perturbed polytrope}
\label{subsubsec:numex_2dpoly_waves}
Next we add a perturbation to the equilibrium pressure of the polytrope as
\begin{align}
  p(r) = \left(1 + A \exp(-r^2 / 0.05^2) \right) ~ p_{0}(r)
\end{align}
with three different amplitudes $A = 10^{-8}, 10^{-4}, 10^{-2}, 10$.
The setup is evolved up to time $t = 0.2$ shortly before the excited waves
reach the boundary of the computational domain.

The reference solution was computed with a one-dimensional second-order
accurate finite volume scheme (assuming cylindrical symmetry) and
resolution $N = \num{32768}$.

\begin{figure}[htbp]
  \begin{center}
    \includegraphics[width=0.45\columnwidth]{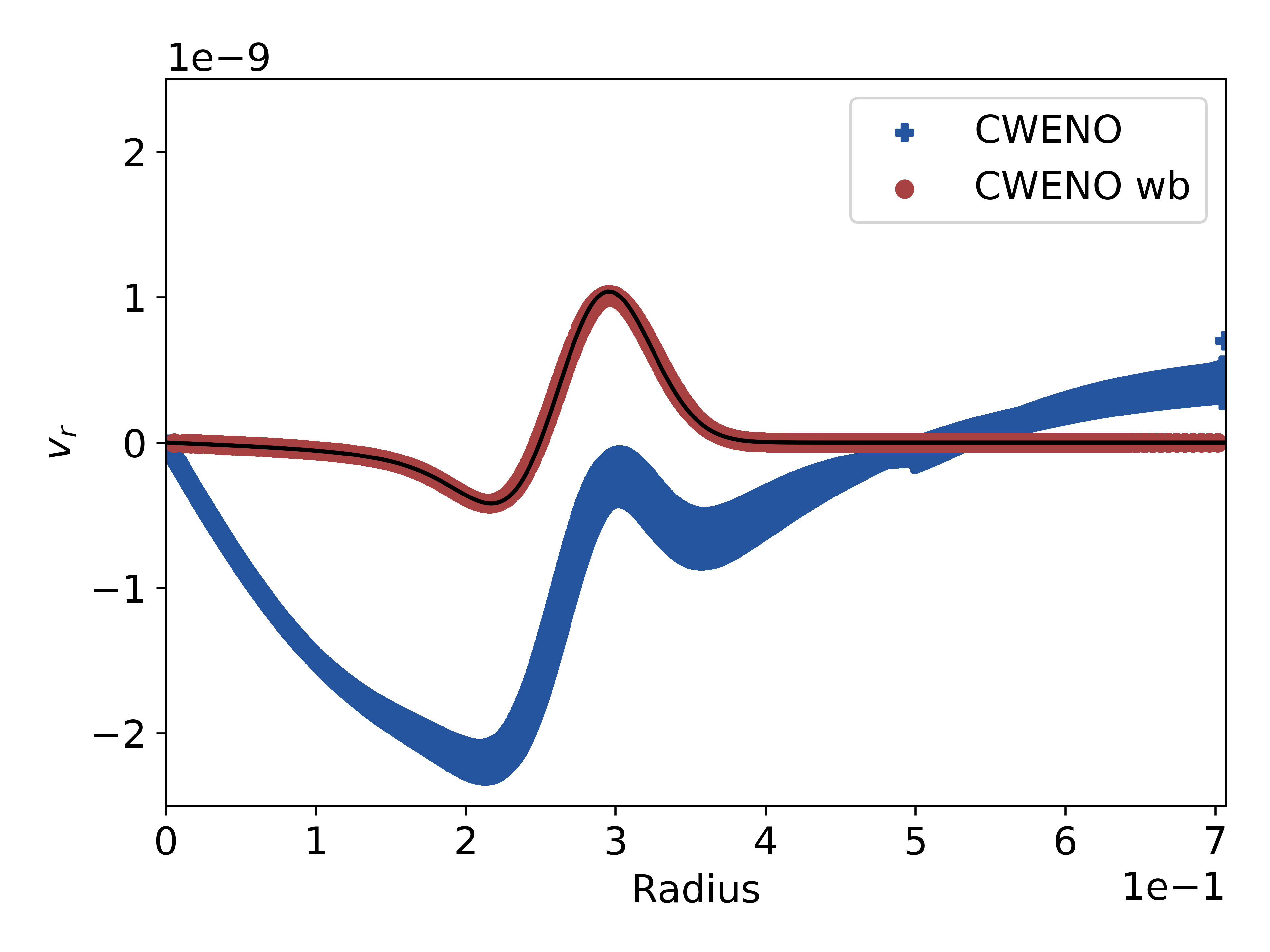}
    \includegraphics[width=0.45\columnwidth]{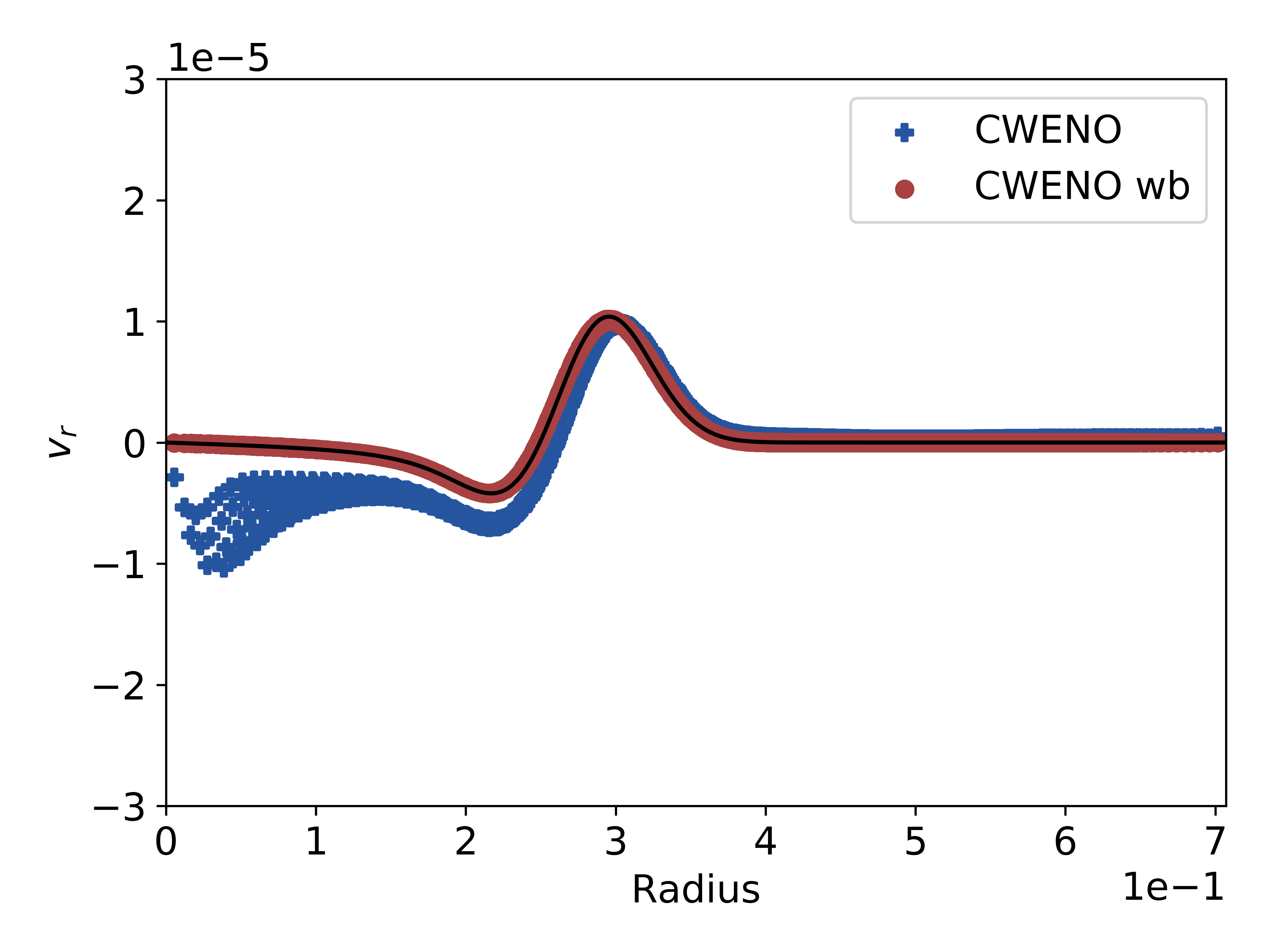}
  \end{center}
  \vspace{-0.2in}
  \caption{Scatter plots of the two-dimensional polytrope, see
    \cref{subsubsec:numex_2dpoly_wb} at time $t = 0.2$. The panel on the left
shows the velocity for $A = 10^{-8}$, the one on the right for $A = 10^{-4}$.
The resolution is generally $N = 128^2$, however, for the smallest perturbation, the
errors of the unbalanced scheme at $N = 128$ exceed the limits of the plot.
Therefore, for $A = 10^{-8}$, we plot the unbalanced scheme at $N = 1024$. In both
cases the well-balanced scheme outperforms the unbalanced scheme. Furthermore,
the well-balanced scheme has no discernible scatter. This is non-trivial since
the radially symmetric solution is approximated on a uniform Cartesian grid
which does not respect the radial symmetry. Furthermore, we see that the
well-balanced scheme, always returns to equilibrium, while the unbalanced scheme
does not. The reference solution was computed with a one-dimensional finite
volume code assuming cylindrical symmetry on $N = \num{32768}$ cells.
  }
  \label{fig:poly_smooth_small}
  \label{fig:poly_smooth_medium}
\end{figure}

\begin{figure}[htbp]
  \begin{center}
    \includegraphics[width=0.45\columnwidth]{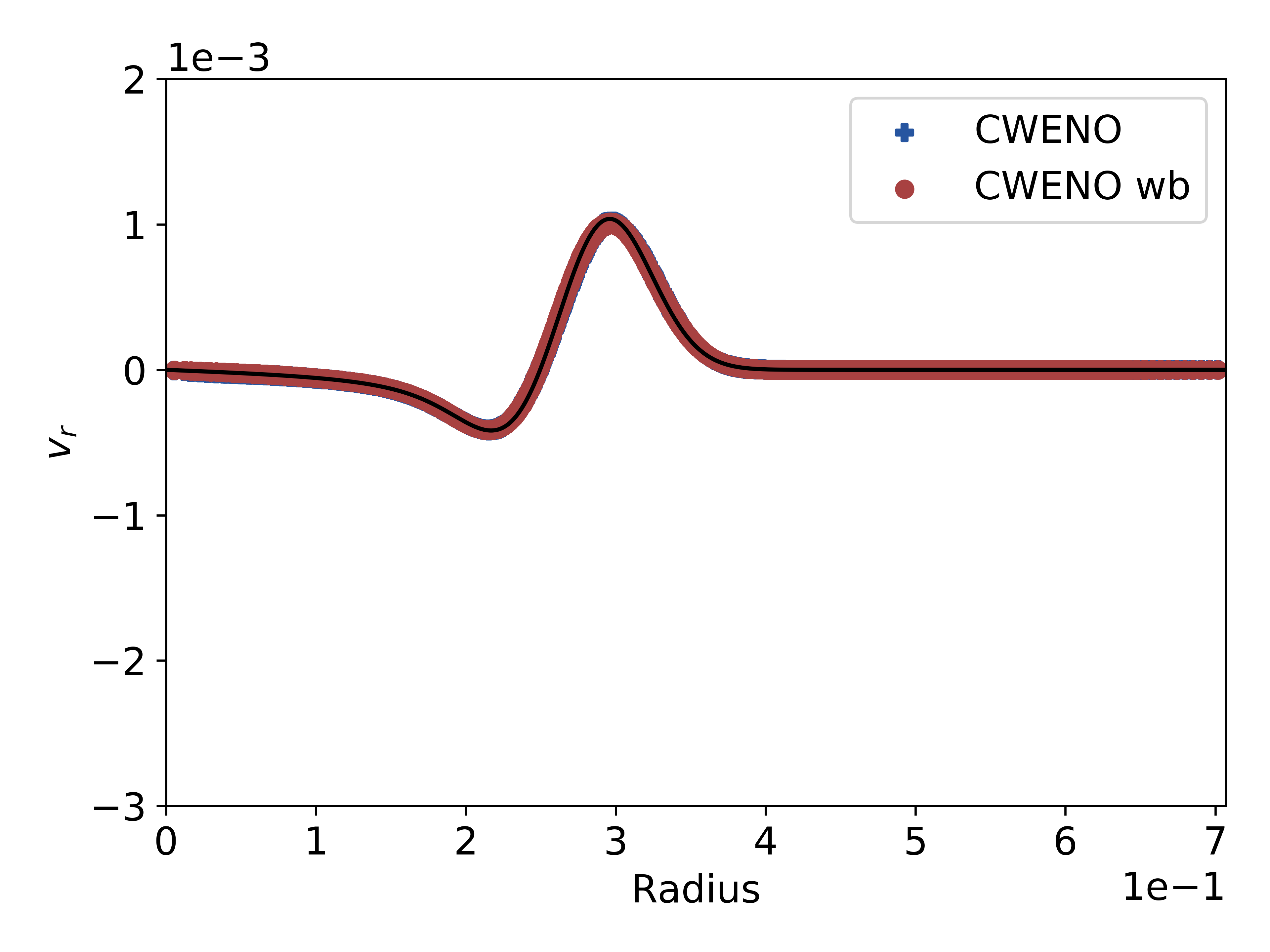}
    \includegraphics[width=0.45\columnwidth]{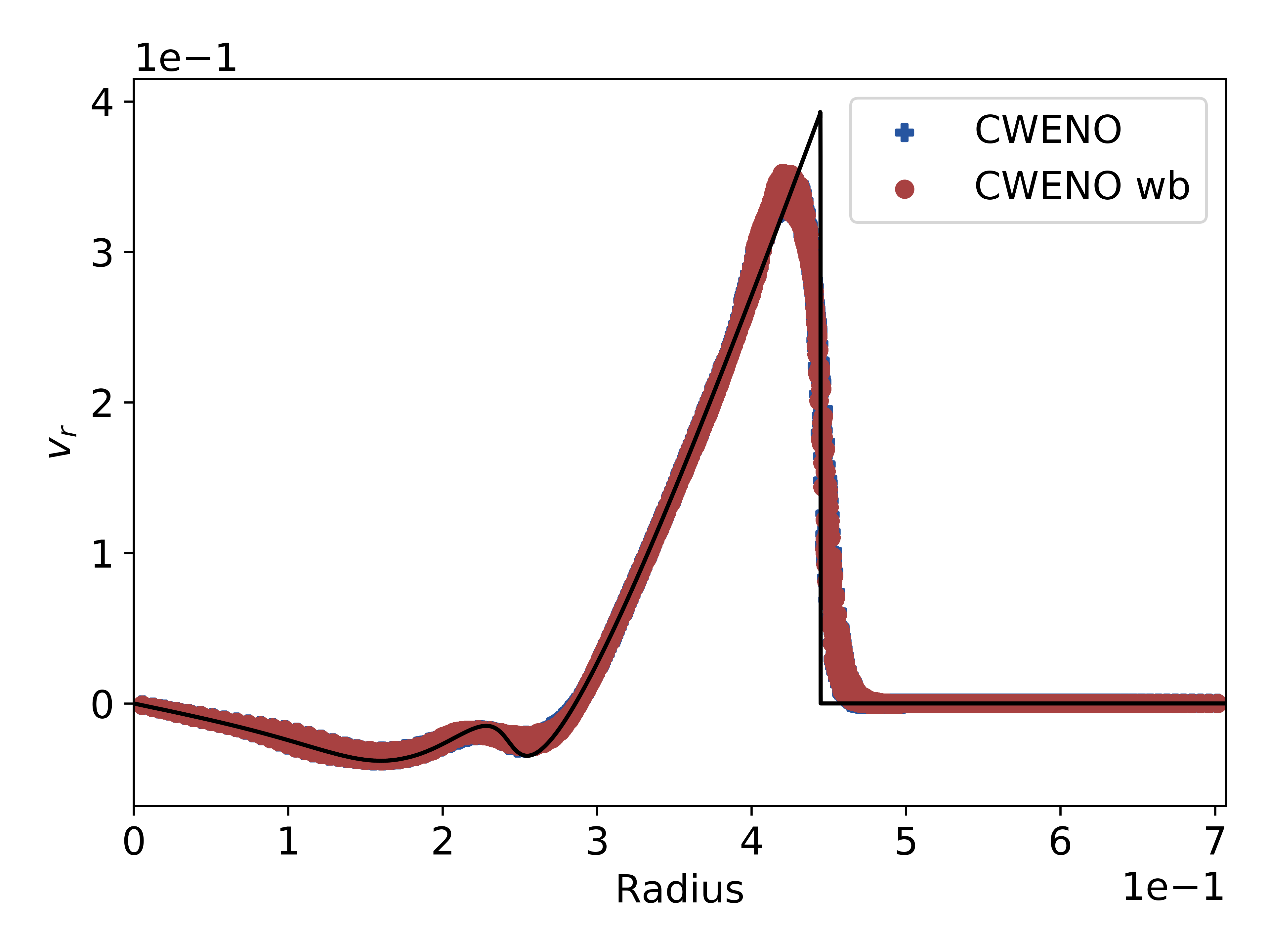}
  \end{center}
  \vspace{-0.2in}
  \caption{Scatter plots of the two-dimensional polytrope, see
    \cref{subsubsec:numex_2dpoly_wb}. The simulation is performed with $N=128^2$
cells and run up to time $t = 0.2$. The panel on the left shows the velocity for
$A = 10^{-2}$, the one on the right for $A = 10$. For these larger perturbations
we see that the unbalanced and well-balanced scheme agree very well. In fact,
\cref{tab:poly_smooth_delta} shows that the errors are on the same order of
magnitude for $A = 10^{-2}$ and differ only about a percent for $A = 10$.
Clearly the well-balancing does not affect the quality of the approximate
solution away from equilibrium. Furthermore, $A = 10$ shows that the
well-balanced scheme resolves discontinuities just as well as the unbalanced scheme.
The reference solution was computed with a one-dimensional finite volume code
assuming cylindrical symmetry on $N = \num{32768}$ cells.
  }
  \label{fig:poly_smooth_large}
  \label{fig:poly_smooth_huge}
\end{figure}

For perturbations of size $A = 10^{-8}$ the
well-balanced scheme clearly outperforms the unbalanced scheme (by at least four
orders of magnitude). Scatter plots of the velocity and pressure perturbation
are shown in \cref{fig:poly_smooth_small}. At $N = 128^2$ the well-balanced
scheme resolves the perturbation well and with no discernible scatter. Which is
not trivial, since the radially symmetric solution is approximated on a uniform
Cartesian grid which does not respect the radial symmetry.

At the next larger perturbation, $A = 10^{-4}$ the well-balanced scheme still
outperforms the unbalanced scheme by a factor $10$. Unlike the unbalanced
scheme, the well-balanced scheme shows no scatter, as can be seen in
\cref{fig:poly_smooth_medium}. Furthermore, once the perturbation has traveled
away from the center of the domain, the solution returns back to equilibrium in
the well-balanced scheme, but not in the unbalanced one. Both schemes converge
at the expected rate.

The second largest perturbation, $A = 10^{-2}$, was chosen such that the
perturbation is very well approximated by the unbalanced scheme, yet small
enough to not develop any discontinuities. The aim is to show that away
from equilibrium the well-balancing does not have a negative impact on the
quality of the solution. This is confirmed in \cref{fig:poly_smooth_large} and \cref{tab:poly_smooth_delta}.

For $A = 10$ both schemes perform equally well, both converge at first order
and the errors differ by approximately one percent. Therefore, well-balancing
has not affected the quality of the solution away from equilibrium. The scatter
plot of the velocity and pressure is shown in \cref{fig:poly_smooth_huge}.
Neither scheme shows any sign of spurious oscillations.

\begin{table}[htbp]
  \begin{center}
    \begin{tabular}{r
                S[table-format=3.2e2]r
                S[table-format=3.2e2]r
}
\toprule
N & 
\multicolumn{2}{c}{\cwenonaive} & 
\multicolumn{2}{c}{\cwenoisentropic} \\
 & 
\multicolumn{1}{c}{$err_{1}(\delta\rho)$} & 
\multicolumn{1}{c}{rate} & 
\multicolumn{1}{c}{$err_{1}(\delta\rho)$} & 
\multicolumn{1}{c}{rate} \\
\midrule
  32 &  5.74e-05  &      --  &  5.01e-11  &      --   \\
  64 &  6.20e-06  &     3.21 &  1.58e-11  &     1.67  \\
 128 &  5.34e-07  &     3.54 &  2.74e-12  &     2.52  \\
 256 &  4.69e-08  &     3.51 &  3.71e-13  &     2.89  \\
 512 &  4.84e-09  &     3.28 &  5.34e-14  &     2.80  \\
1024 &  5.67e-10  &     3.09 &  1.99e-14  &     1.42  \\
\bottomrule
\end{tabular}
    \begin{tabular}{r
                S[table-format=3.2e2]r
                S[table-format=3.2e2]r
}
\toprule
N & 
\multicolumn{2}{c}{\cwenonaive} & 
\multicolumn{2}{c}{\cwenoisentropic} \\
 & 
\multicolumn{1}{c}{$err_{1}(\delta\rho)$} & 
\multicolumn{1}{c}{rate} & 
\multicolumn{1}{c}{$err_{1}(\delta\rho)$} & 
\multicolumn{1}{c}{rate} \\
\midrule
  32 &  5.76e-05  &      --  &  5.00e-07  &      --   \\
  64 &  6.27e-06  &     3.20 &  1.58e-07  &     1.66  \\
 128 &  5.46e-07  &     3.52 &  2.74e-08  &     2.52  \\
 256 &  4.70e-08  &     3.54 &  3.69e-09  &     2.89  \\
 512 &  4.84e-09  &     3.28 &  4.68e-10  &     2.98  \\
1024 &  5.67e-10  &     3.09 &  5.87e-11  &     2.99  \\
\bottomrule
\end{tabular}
    \begin{tabular}{r
                S[table-format=3.2e2]r
                S[table-format=3.2e2]r
}
\toprule
N & 
\multicolumn{2}{c}{\cwenonaive} & 
\multicolumn{2}{c}{\cwenoisentropic} \\
 & 
\multicolumn{1}{c}{$err_{1}(\delta\rho)$} & 
\multicolumn{1}{c}{rate} & 
\multicolumn{1}{c}{$err_{1}(\delta\rho)$} & 
\multicolumn{1}{c}{rate} \\
\midrule
  32 &  8.43e-05  &      --  &  5.50e-05  &      --   \\
  64 &  2.15e-05  &     1.97 &  1.79e-05  &     1.62  \\
 128 &  3.27e-06  &     2.72 &  2.84e-06  &     2.65  \\
 256 &  4.04e-07  &     3.02 &  3.72e-07  &     2.93  \\
 512 &  4.95e-08  &     3.03 &  4.68e-08  &     2.99  \\
1024 &  6.14e-09  &     3.01 &  5.87e-09  &     3.00  \\
\bottomrule
\end{tabular}
    \begin{tabular}{r
                S[table-format=3.2e2]r
                S[table-format=3.2e2]r
}
\toprule
N & 
\multicolumn{2}{c}{\cwenonaive} & 
\multicolumn{2}{c}{\cwenoisentropic} \\
 & 
\multicolumn{1}{c}{$err_{1}(\delta\rho)$} & 
\multicolumn{1}{c}{rate} & 
\multicolumn{1}{c}{$err_{1}(\delta\rho)$} & 
\multicolumn{1}{c}{rate} \\
\midrule
  32 &  2.88e-02  &      --  &  2.98e-02  &      --   \\
  64 &  1.42e-02  &     1.02 &  1.46e-02  &     1.03  \\
 128 &  6.36e-03  &     1.16 &  6.46e-03  &     1.17  \\
 256 &  3.02e-03  &     1.08 &  3.05e-03  &     1.08  \\
 512 &  1.51e-03  &     1.00 &  1.51e-03  &     1.01  \\
1024 &  7.69e-04  &     0.97 &  7.69e-04  &     0.97  \\
\bottomrule
\end{tabular}
    \caption{Convergence data for the polytrope with perturbation. We
      show $err_1(\delta \rho)$ at $t = 0.2$ for the unbalanced (left) and
      well-balanced scheme (right). The first table contains the errors for the
      smallest perturbation $A = 10^{-8}$. Clearly, the well-balanced scheme
      outperforms the unbalanced scheme. Furthermore, the expected rate is
      observed until round off sets in at $N = 512^2$. The second table shows
      the errors for the medium perturbation, $A = 10^{-4}$. The well-balanced
      scheme is still slightly better than the unbalanced one. However the real
      benefit can be seen much more clearly in the scatter plots, c.f.\
      \cref{fig:poly_smooth_medium}. The third table is for $A = 10^{-2}$. We
      clearly see that the well-balancing had no negative affect on the quality of the
      solution, even though the solution is no longer near equilibrium. Finally,
      the fourth table shows the case for $A = 10$. The smooth perturbation turns into a
      discontinuity and only first order convergence can be expected. It's interesting
      to see that the error of the unbalanced and well-balanced scheme differ by only
      about one percent.
}
    \label{tab:poly_smooth_delta}
  \end{center}
\end{table}

\subsubsection{Blast waves}
\label{subsubsec:numex_2dpoly_blast}
In order to further verify that our well-balanced scheme does not deteriorate the
robustness and shock-capturing properties of the base scheme, we add
to the polytrope several localized high pressure regions.
To this end, we add the following pressure perturbation to the equilibrium
polytrope
\begin{align}
  \delta p(\bx) = 100 \sum_{i=1}^6 \mathbf{1}_{B(\bx_i, r)}(\bx)
  ,
\end{align}
where $B_{\bx, R} = \{\bx^\prime \in \IR^2 : \norm{\bx^\prime-\bx} < R\}$
denotes the open ball of radius $R$ centered on $\bx$ and $\mathbf{1}_{B}$ the
indicator function for the set $B$, i.e.
\begin{equation*}
  \mathbf{1}_{B}(\bx) = \begin{cases}
                          1 & \text{if } \bx \in B , \\
                          0 & \text{otherwise.}
                        \end{cases}
\end{equation*}
We setup six ''high pressure balls'' with radii $R = 0.05$ and centers
\begin{align*}
\bx_1 & = [-0.25, 0.3]^T,
\;
\bx_2   = [-0.15, 0.1]^T,
\;
\bx_3   = [0.025, 0.3]^T,
\;
\bx_4   = [0.025, 0.225]^T,
\\
\bx_5 & =
\bx_6   = [0.1, -0.1]^T.
\end{align*}
The velocity is set to zero everywhere.
The initial conditions are shown in the upper panel of \cref{fig:poly_blast}.

We evolve the setup until time $t = 0.02$ with the well-balanced and
unbalanced scheme at resolution $N = 128^2$
We show a snapshot at $t = 0.02$ in \cref{fig:poly_blast}.
Even under these much more extreme conditions with non-trivial wave
interactions, the well-balanced scheme is stable and by eye
indistinguishable from the unbalanced scheme.

\begin{figure}[htbp]
  \begin{center}
    \includegraphics[width=0.45\columnwidth]{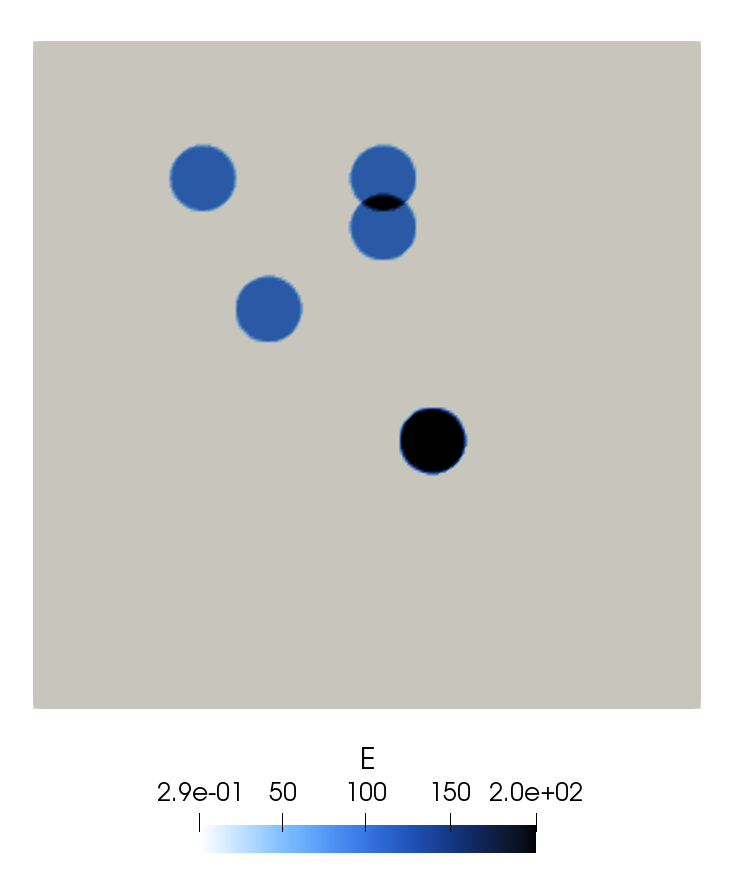}
    \\
    \includegraphics[width=0.45\columnwidth]{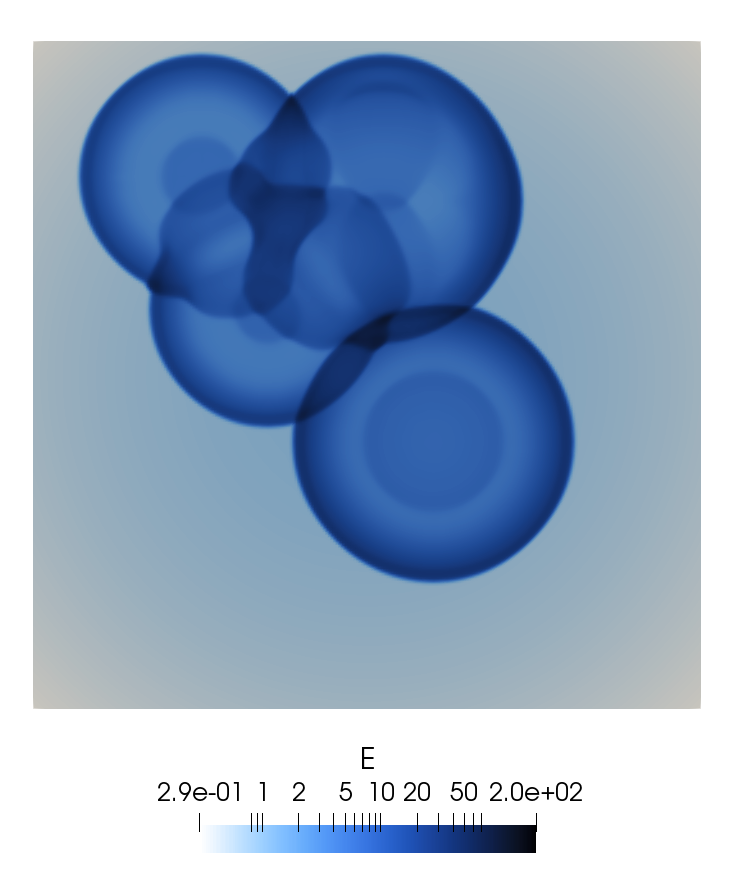}
    \includegraphics[width=0.45\columnwidth]{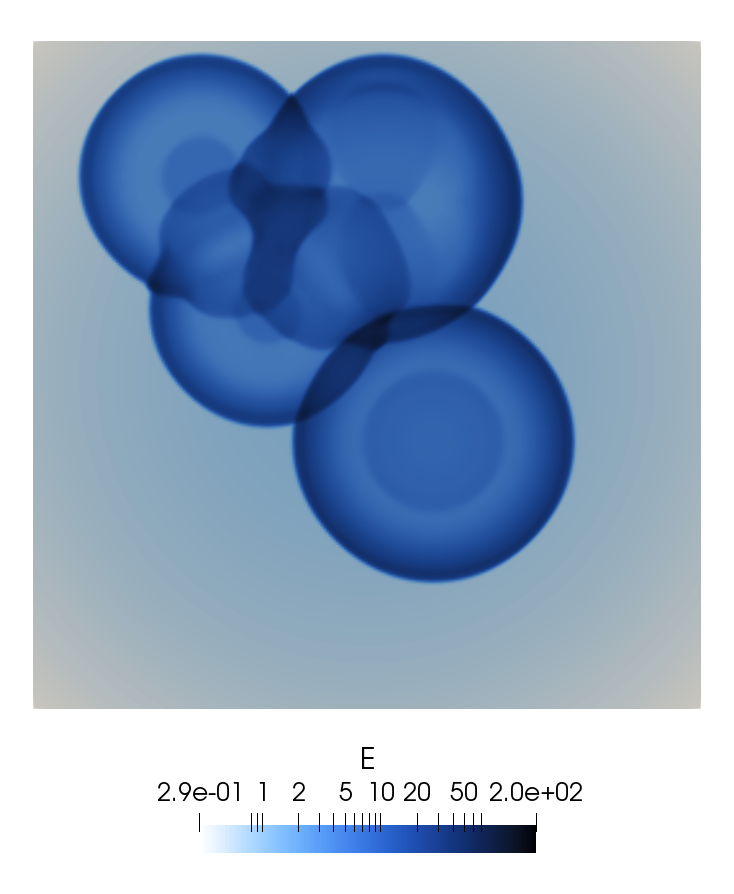}
  \end{center}
  \vspace{-0.2in}
  \caption{Snapshot of the two dimensional blast waves, see
\cref{subsubsec:numex_2dpoly_blast}. The upper image shows the initial total
energy (on a linear scale). The remaining three plots show the total energy at
$t = 0.02$ (on a logarithmic scale) for the unbalanced (bottom, left) and
well-balanced (bottom, right) scheme. Even for these extreme initial conditions
the unbalanced and well-balanced schemes perform equally well.}
  \label{fig:poly_blast}
\end{figure}



\subsection{White Dwarf}
\label{subsec:subsubsec:numex_2dwd}
The final numerical experiment assesses the performance of our well-balanced
scheme on a astrophysically relevant problem involving a complex multiphysics
EoS. We simulate the equilibrium and some perturbations of a model white dwarf.
A white dwarf is the final evolutionary state of a star not massive enough to go
through the final nuclear burning stages and become a neutron star or a black
hole (see e.g.\ \cite{Shapiro2007}).

This numerical experiment is a two-dimensional version of the one presented in
\cite{Kaeppeli2014199}. Likewise, we use the publicly available Helmholtz EoS
(see \cite{Timmes2000} for a detailed descriptions and \cite{Timmes2013}). This
EoS includes contributions of (photon) radiation, nuclei, electrons and
positrons and is well adapted to large range of stellar environments. The
radiation is treated as a blackbody in local thermal equilibrium and the nuclei
are modeled by the ideal gas law. For computational efficiency, the electrons
and positrons are treated in a tabular manner with a thermodynamically
consistent interpolation procedure.

The white dwarf model is fully characterized by specifying the central density,
the chemical composition and the thermodynamic equilibrium. We set the central
density $\rho = \SI{2e9}{g \per cm\cubed}$ and temperature $T = \SI{5e8}{K}$. We
assume a constant specific entropy and set the composition to half carbon
$^{12}\mathrm{C}$ and half oxygen $^{12}\mathrm{O}$. Then the model can be
constructed by simple numerical integration of the self-gravitating hydrostatic
equilibrium equations in spherical symmetry. We refer to \cite{Kaeppeli2014199}
for the detailed procedure.

The one-dimensional white dwarf profile is then mapped onto the two-dimensional
computational domain $[-L,L]^2$ with $L = \SI{1e8}{cm}$. The velocity is set to
zero. The same hydrostatic extrapolation boundary conditions are used as in
\cref{subsec:numex_2dpoly}.

\subsubsection{Well-balanced property}
We evolve the hydrostatic equilibrium without perturbation on a grid with $N =
128^2$ cells until time $t = \SI{1}{s}$. The unbalanced scheme has
$err_{eq,1}(\rho) = \SI{3.21e4}{g / cm^3}$ and $err_{eq,1}(E) =
\SI{2.79e22}{erg/cm^3}$. The well-balanced scheme is confirmed to be exact up to
machine precision, with errors of $err_{eq,1}(\rho) = \SI{5.59e-06}{g / cm^3}$
and $err_{eq,1}(E) = \SI{7.45e+12}{erg/cm^3}$. This shows that equation
\cref{eq:nm_md_0080} can be solved numerically and the solution is effectively
unique. If the iterative procedure where to find a different equilibrium in any
cell, it would be very unlikely that the resulting scheme would be
well-balanced.

\subsubsection{Wave propagation} \label{sec:white_dwarf_smooth}
 Next we add a small Gaussian pressure perturbation at the origin, i.e.\
\begin{align}
 p_0(\bx) = (1 + A \exp(-\abs{\bx}^2 / 2 b^2))\ p_{eq}(\bx),
\end{align}
 with $A = 10^{-3}$ and $b = \SI{1e7}{cm}$. The solution is
evolved to $t = \SI{7.32e-2}{s}$ on $N = 128^2$. A scatter plot of the solution
is shown in \cref{fig:white_dwarf_smooth}. The scatter in the well-balanced
scheme is significantly reduced compared to the unbalanced scheme. Unlike the
unbalanced solution, the well-balanced solution remains constant ahead of the
perturbation and returns to rest after the perturbation has passed.

The reference solution is computed using a one-dimensional, cylindrically
symmetric, well-balanced finite volume code with a resolution of $N =
\num{8192}$.

\begin{figure}[htbp]
  \begin{center}
    \includegraphics[width=0.45\columnwidth]{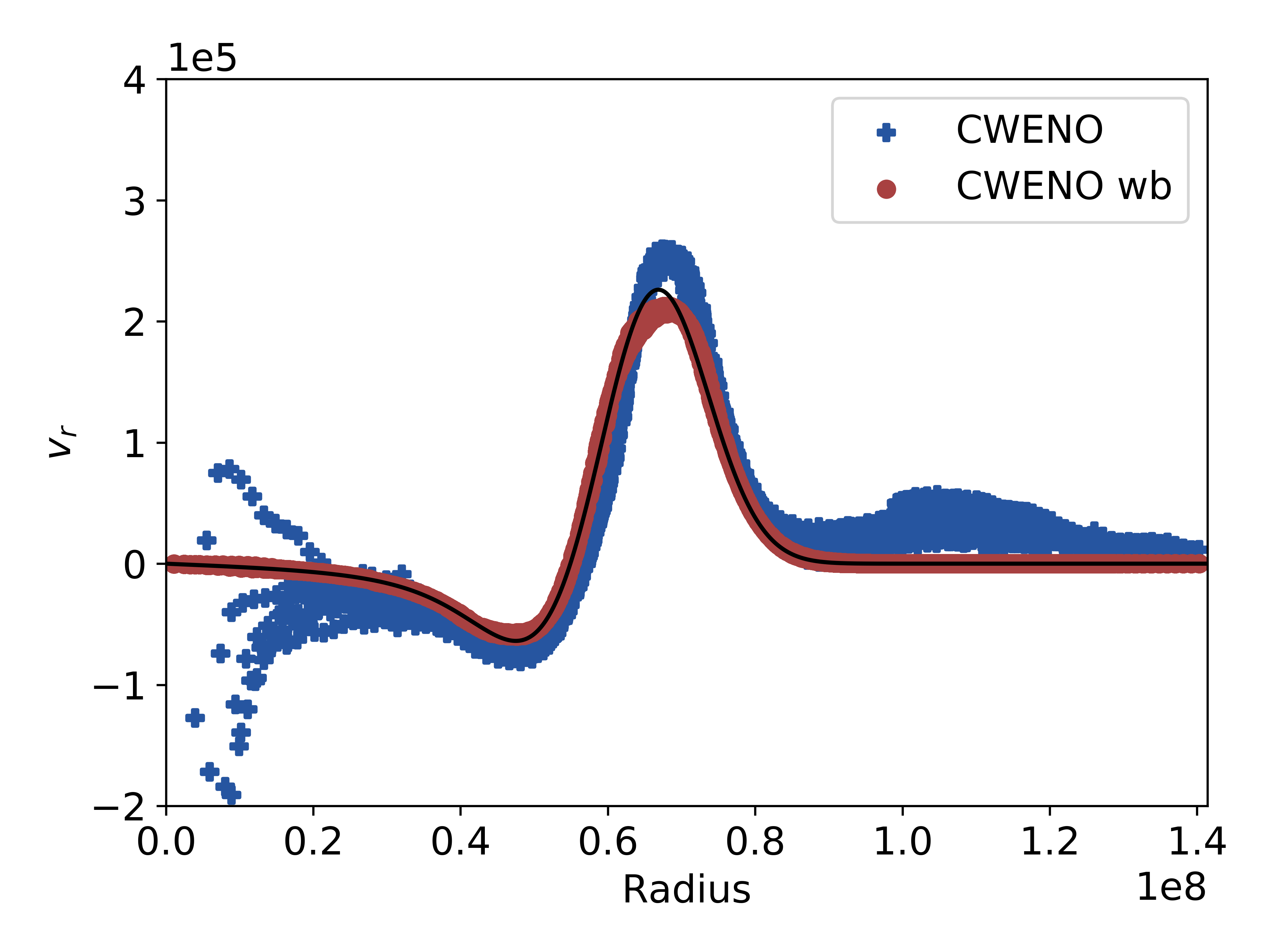}
    \includegraphics[width=0.45\columnwidth]{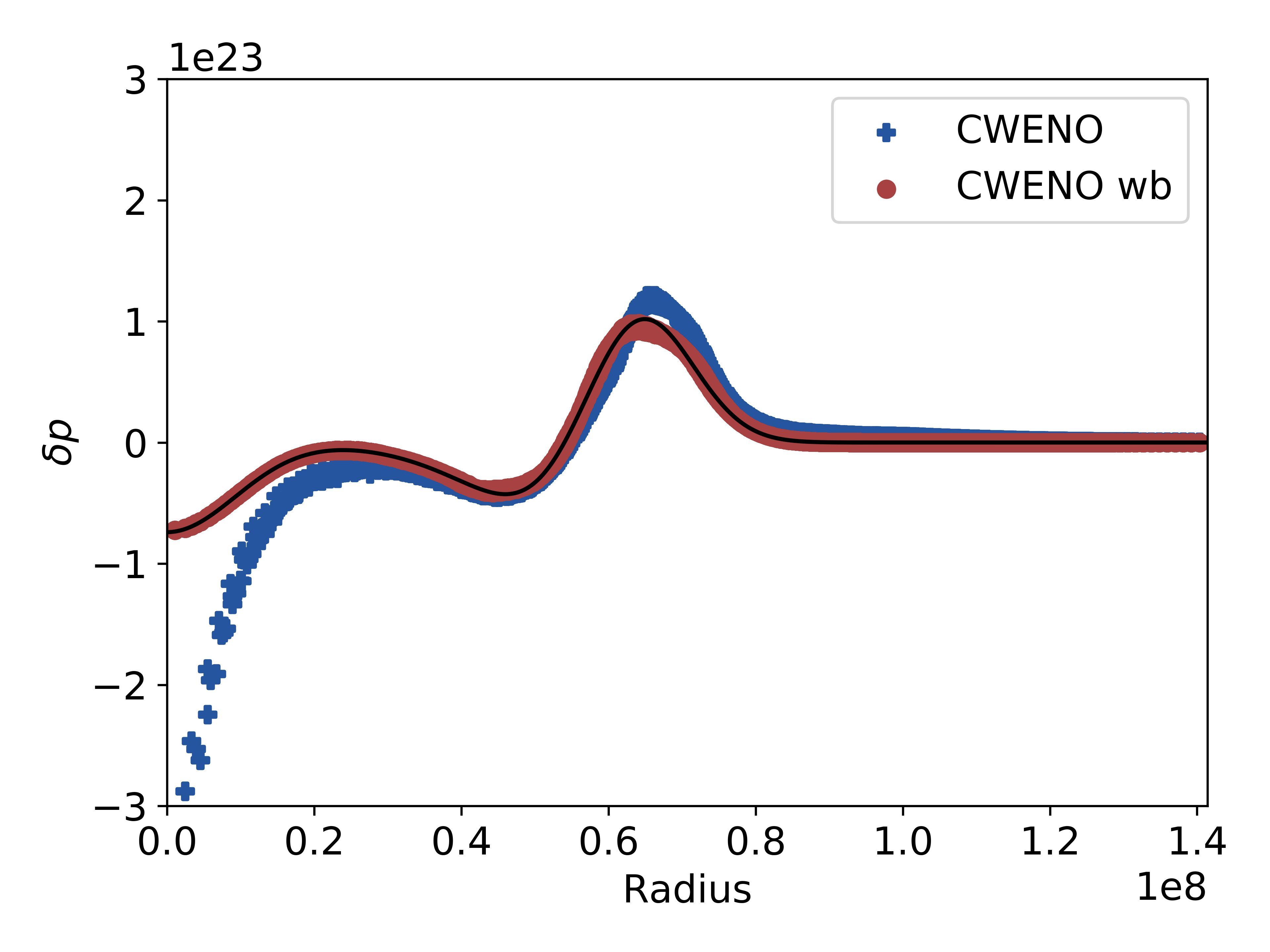}
  \end{center}
  \vspace{-0.2in}
  \caption{Snapshots of the two-dimensional white-dwarf, see
\cref{sec:white_dwarf_smooth} with a small perturbation. The simulation is
performed with $N=128^2$ cells and run up to time $t = \SI{7.32e-2}{s}$. The
radial velocity is shown on the left, the pressure perturbation on the right.
The size of the initial perturbation was chosen such that the main feature of the
perturbation is resolved similarly well in both solvers. However, the
well-balanced scheme has significantly less scatter and shows no deviation from
equilibrium ahead of the perturbation. Furthermore, the well-balanced scheme
returns to rest after the wave has moved away from the center of the domain,
i.e.\ $r =\SI{0}{cm}$ on the plot. The units in the plot are CGS.
}
  \label{fig:white_dwarf_smooth}
\end{figure}


\section{Conclusion}
\label{sec:conc}
We presented a novel well-balanced, high-order finite volume scheme for Euler
equations with gravity. We are able to well-balance a large class of
astrophysically relevant hydrostatic equilibria without imposing the exact
equilibrium apriori. Rather, we only assume some thermodynamic information about
the equilibrium, e.g. constant entropy, and solve for the equilibrium in every
time step. Since the equilibrium defined by
\begin{align}
  \grad{p} = - \rho \grad{\phi}
\end{align}
is only a mechanical equilibrium, it seems natural that some additional
information about the thermodynamic nature of the equilibrium must always be
imposed.

The important features of the proposed scheme are:
\begin{itemize}
  \item Its independence of a particular form of equation of state. This scheme can
    handle arbitrary equations of state including tabulated ones, as shown in the
    final numerical experiment.

  \item Its independence of a particular gravitational potential. The only
    requirement is that the gravitation potential and its gradient can be
    evaluated at apriori known locations in the computation domain. In fact the
    gravitation source term does not need to be constant in time. Therefore this
    scheme can also be applied to simulations which include self-gravity. Such
    simulations may benefit from well-balancing if the initial conditions are at
    rest and perturbed by some other means, such as a heating source term.

  \item Its modular nature. The scheme clearly describes how any reconstruction
    procedure can be made well-balanced. Therefore, the proposed scheme can be
    extended to arbitrary orders in a straightforward manner.

  \item Its local nature. The well-balancing is local to each cell. In
    particular it does not change the stencil required to update the cell.
\end{itemize}

The numerical experiments have shown that the scheme is high-order accurate for
flows both near and far away from hydrostatic equilibrium. In fact, the
numerical results suggest that the scheme is no worse on large perturbations
than the equivalent unbalanced scheme. We have also shown that the schemes are
stable in the presence of shocks. Furthermore, the smooth tests show that the
well-balanced scheme preserves radial symmetry much better than the unbalanced
scheme. Furthermore, the well-balanced solutions do not cause any changes in the
part of the domain the perturbation has not reached yet. Additionally, the
well-balanced scheme returns to rest after the perturbation has passed over some
region in the domain, while the unbalanced scheme does neither. These tests were
performed under a variety of different conditions, i.e.\ in one dimension for
constant gravity, in two dimensions for non-grid aligned gravity with both the
ideal gas law and a complex multiphysics equation of state.

Our scheme only affects the reconstruction procedure and the numerical source
term. Therefore, large parts of an existing finite volume code would remain
unaffected by adding our well-balancing. By reusing the existing unbalanced
reconstruction procedure for the perturbation the cost of implementing our
scheme is further reduced. These very localized and modular changes ensure that
our method can be used to well-balance a variety of different existing finite
volume schemes with minimal effort.


\section*{Acknowledgments}
The work was supported by the Swiss National Science Foundation (SNSF) under
grant 200021-169631.
The authors also acknowledge the use of computational resources provided by
the Swiss SuperComputing Center (CSCS), under the allocation grant s661, s665,
s667 and s744.
We acknowledge the computational resources provided by the EULER cluster of
ETHZ.


\bibliographystyle{elsarticle-num.bst}

\let\jnl=\rm
\def\aap{\jnl{A\&A}}               
\def\apj{\jnl{ApJ}}                
\def\apjl{\jnl{ApJ}}               
\def\apjs{\jnl{ApJS}}              
\def\physrep{\jnl{Phys.~Rep.}} 

\bibliography{biblio.bib}


\appendix
\section{Equilibrium reconstruction for the ideal gas law}
\label{sec:appendix01}
In the ideal gas case, it can be shown that a unique equilibrium exists which
matches the cell-averages, i.e.\ satisfies \cref{eq:nm_md_0120} (in one
dimension) and \cref{eq:nm_1d_wbrec_0020} (in two dimensions).

In a first step the system is reduced to a single nonlinear equation in one
unknown. To this end, we write the ideal gas law in the polytropic form
\begin{equation}
  \label{eq:nm_1d_wbrec_0040_appendix}
  p = p(K,\rho) = K \rho^{\gamma}
  ,
\end{equation}
where $K = K(s)$ is a function of entropy $s$ alone and $\gamma$ is the ratio
of specific heats.
Then the equilibrium density and internal energy density can be expressed as
functions of the constant $K_{0,i}$ and the enthalpy at cell center $h_{0,i}$:
\begin{equation}
  \label{eq:nm_1d_wbrec_0050_appendix}
  \begin{aligned}
  \rho_{eq,i}  (x) & = \left( \frac{1}{K_{0,i}} \frac{\gamma - 1}{\gamma}
                              h_{eq,i}(x)
                       \right)^{\frac{1}{\gamma - 1}} \\
  \rho e_{eq,i}(x) & = \frac{1}{\gamma - 1}
                       \left( \frac{1}{K_{0,i}} \right)^{\frac{1}{\gamma - 1}}
                       \left( \frac{\gamma - 1}{\gamma}
                              h_{eq,i}(x)
                       \right)^{\frac{\gamma}{\gamma - 1}}
  .
  \end{aligned}
\end{equation}
By plugging the latter into \eqref{eq:nm_1d_wbrec_0020}, one obtains a
single equation for $h_{0,i}$
\begin{equation}\label{eq:a3}
  \mrhoe_{i} = \frac{\mrho_{i}}{\gamma - 1}
               \frac{\sum_{j=1}^{N_{q}} w_{j}
                       \left( \frac{\gamma - 1}{\gamma}
                              \left( h_{0,i} + \phi_{i} - \phi(x_{j}) \right)
                       \right)^{\frac{\gamma}{\gamma - 1}}}
                    {\sum_{j=1}^{N_{q}} w_{j}
                       \left( \frac{\gamma - 1}{\gamma}
                              \left( h_{0,i} + \phi_{i} - \phi(x_{j}) \right)
                       \right)^{\frac{1}{\gamma - 1}}} =: f(h_{0,i})
\end{equation}
and the constant $K_{0,i}$ is simply given by
\begin{equation}
  K_{0,i} = \left[ \frac{1}{\Delta x ~ \mrho_{i}}
                   \sum_{j=1}^{N_{q}} w_{j}
                       \left( \frac{\gamma - 1}{\gamma}
                              \left( h_{0,i} + \phi_{i} - \phi(x_{j}) \right)
                       \right)^{\frac{1}{\gamma - 1}}
            \right]^{\gamma - 1}
  .
\end{equation}
To show that \cref{eq:a3} has a unique solution we show that it is monotone.
Therefore, we differentiate $f$ and find
\begin{align}
  f^\prime(h_{0,i})
 = \frac{\mrho_{i}}{\gamma - 1}
 \left( 1
   - \frac{1}{\gamma}
   \frac{
    \sum_{j=1}^{N_{q}} w_{j}
           \left(
                   h_{0, i} + \phi_{i} - \phi(x_{j})
           \right)^{\frac{\gamma}{\gamma - 1}}
    \cdot
    \sum_{j=1}^{N_{q}} w_{j}
           \left(
                   h_{0, i} + \phi_{i} - \phi(x_{j})
           \right)^{\frac{2 - \gamma}{\gamma - 1}}
   }
   {\left(\sum_{j=1}^{N_{q}} w_{j}
       \left(
              h_{0,i} + \phi_{i} - \phi(x_{j})
       \right)^{\frac{1}{\gamma - 1}}\right)^2
   }
  \right).
\end{align}
Clearly, the second term is positive, and if it where less than $\gamma$ the
derivative of $f$ would be positive, everywhere, and therefore $f$ would be a
strictly monotone function. If within every cell $\phi$ does not vary too much, this turns
out to be true and can be proven as follows.

Let
\begin{align}
 h_{max,i} &= h_{0,i} + \max_{x\in [x_{i-\half}, x_{i+\half}]}{\phi_i - \phi(x)}
  \\
 h_{min,i} &= h_{0,i} + \min_{x\in [x_{i-\half}, x_{i+\half}]}{\phi_i - \phi(x)}
\end{align}
then for $1 < \gamma \leq 2$ we find
\begin{align}
   \frac{
    \sum_{j=1}^{N_{q}} w_{j}
           \left(
                  h_{0, i} + \phi_{i} - \phi(x_{j})
           \right)^{\frac{\gamma}{\gamma - 1}}
    \cdot
    \sum_{j=1}^{N_{q}} w_{j}
           \left(
                  h_{0,i} + \phi_{i} - \phi(x_{j})
           \right)^{\frac{2 - \gamma}{\gamma - 1}}
   }
   {\left(\sum_{j=1}^{N_{q}} w_{j}
       \left(
              h_{0,i} + \phi_{i} - \phi(x_{j})
       \right)^{\frac{1}{\gamma - 1}}\right)^2
   }
  \\
  \leq
   \frac{
    \sum_{j=1}^{N_{q}} w_{j}
           h_{max,i}^{\frac{\gamma}{\gamma - 1}}
    \cdot
    \sum_{j=1}^{N_{q}} w_{j}
           h_{max,i}^{\frac{2 - \gamma}{\gamma - 1}}
   }
   {\left(\sum_{j=1}^{N_{q}} w_{j}
       h_{min,i}^{\frac{1}{\gamma - 1}}\right)^2
   }
  = \frac{h_{max,i}^2}{h_{min,i}^2}
  .
\end{align}
Therefore, under the condition that
\begin{align}
  \frac{h_{max,i}}{h_{min,i}} < \gamma^{\half}
\end{align}
$f$ has a unique solution. By a very similar estimate we find that for $\gamma >
2$ a unique solution exists provided
\begin{align}
  \frac{h_{max,i}}{h_{min,i}} < \gamma^{\frac{\gamma-1}{\gamma}}.
\end{align}

\end{document}